\renewcommand{\a}{\alpha}
\newcommand{\g}{\gamma}
\newcommand{\D}{\Delta}
\newcommand{\s}{\sigma}
\newcommand{\Si}{\Sigma}
\newcommand{\cM}{{\mathcal M}}
\newcommand{\cL}{{\mathcal L}}
\newcommand{\cD}{{\mathcal D}}
\newcommand{\cX}{{\mathcal X}}
\newcommand{\bR}{\mathbb R}
\newcommand{\bS}{\mathbb S}
\newcommand{\bE}{\mathbb E}
\newcommand{\be}{\begin{equation}}
\newcommand{\ee}{\end{equation}}
\newcommand{\bes}{\begin{equation*}}
\newcommand{\ees}{\end{equation*}}
\newcommand{\pa}{\partial}
\renewcommand{\to}{\rightarrow}
\newcommand{\<}{\langle}
\renewcommand{\>}{\rangle}
\newcommand{\w}{\widetilde}
\theoremstyle{plain}
\newtheorem{lemma}{Lemma}[section]
\newtheorem{proposition}[lemma]{Proposition}
\newtheorem{theorem}[lemma]{Theorem}
\newtheorem{Theorem}{Theorem}
\newtheorem{definition}[lemma]{Definition}
\numberwithin{equation}{section}
\theoremstyle{definition}
\newtheorem{remark}[lemma]{Remark}
\newtheorem{notation}[lemma]{Notation}
\DeclareMathOperator{\tr}{tr} 
\DeclareMathOperator{\Div}{div}
\DeclareMathOperator{\Ric}{Ric}
\DeclareMathOperator{\Ker}{Ker}
\DeclareMathOperator{\C}{\mathcal{C}}
\newcommand{\R}{\mathbb{R}}
\newcommand{\m}{k}
\newcommand{\ml}{{k\ell}}
\renewcommand{\sc}{\mathrm{sc}}
\renewcommand{\g}{\mathfrak{g}}
\newcommand{\gsc}{\mathfrak{g}_{\mathrm{sc}}}
\newcommand{\CC}[2]{\mathcal{C}^{#1,\alpha}_{#2}(M)}
\begin{document}

\title[]{Existence of static vacuum extensions for Bartnik boundary data near Schwarzschild spheres}
\author{Spyros Alexakis}
\address{Department of Mathematics, University of Toronto}
\email{alexakis@math.toronto.edu}
\author{Zhongshan An}
\address{Department of Mathematics, University of Michigan}
\email{zsan@umich.edu}
\author{Ahmed Ellithy}
\address{Department of Mathematics, University of Toronto}
\email{ahmed.ellithy@mail.utoronto.ca}
\author{Lan-Hsuan Huang}
\address{Department of Mathematics, University of Connecticut}
\email{lan-hsuan.huang@uconn.edu}

\thanks{Part of the work was completed while ZA and LH were in residence at the Simons Laufer Mathematical Sciences Institute in Berkeley, California, during the Fall 2024 semester, supported by the NSF DMS-1928930. In addition, LH was partially supported by NSF DMS-2304966.}

\begin{abstract}
We obtain existence and local uniqueness of asymptotically flat, static vacuum extensions for Bartnik data on a sphere near the data of a sphere of symmetry in a Schwarzschild manifold.

 \end{abstract}
\maketitle 
\section{Introduction}
Let $M$ be an $n$-dimensional smooth manifold where $n\ge 3$. A Riemannian manifold $(M, \mathfrak g)$ is called \emph{static vacuum} if there exists a scalar function $f$ on $M$ that is not identically zero  such that 
\begin{equation} \label{orig}
    \begin{cases}
        &f\Ric_{\mathfrak{g}} = \text{Hess}_{\mathfrak{g}} f\\
        &\Delta_{\mathfrak{g}} f = 0
    \end{cases}
   \ \  \mbox{ in }M.
\end{equation}
The pair $(\mathfrak g, f)$ is referred to as a static vacuum pair, and $f$ is called a static potential. In \cite{Bartnik:1989}, Bartnik proposed the notion of Bartnik quasi-local mass, defining the mass of a compact Rimannian manifold with nonempty boundary $(\Omega, \mathfrak g_0)$ as the infimum of the ADM masses over all its admissible asymptotically flat extensions. In particular, those extensions must have the same Bartnik boundary data—specifically, the induced metric and mean curvature—on the boundary as $(\Omega, \mathfrak g_0)$. It was conjectured by Bartnik and later proved by Covino \cite{Corvino:2000} and Anderson-Jauregui \cite{Anderson-Jauregui:2019} that an extension that realizes the infimum ADM mass must be static vacuum with a static potential $f\to 1$ at infinity. 

The above observations naturally lead to another conjecture of Bartnik on the existence and uniqueness of asymptotically flat static vacuum extensions for prescribed Bartnik data, see, for example~\cite[Conjecture 7]{Bartnik:2002}. More precisely, on an asymptotically flat manifold $M$ with nonempty boundary $\Si:=\partial M$, given a Riemannian metric $\tau$ and a scalar function $\phi$ on $\Si$, the question is whether there exists a unique asymptotically flat static vacuum pair $(\mathfrak g, f)$ on $M$  which satisfies the Bartnik boundary condition
\begin{equation}\label{bdry}
 \mathfrak{g}^\intercal = \tau \quad \mbox{ and } \quad  H_{\mathfrak{g}} = \phi \quad  \quad \mbox{ on } \Si.
\end{equation}
Here $\mathfrak g^\intercal$ and $H_{\mathfrak g}$ denote the induced metric and mean curvature of the boundary $\Sigma\subset (M,\mathfrak g)$, respectively. Recall that the pair is referred to as the Bartnik boundary data.  (Our convention of the mean curvature is the tangential divergence of the unit normal pointing to infinity.) 

The simplest static vacuum manifold is the Euclidean space $(\mathbb R^n, g_{\bE})$. Consider the exterior region of the unit ball in  3-dimensional Euclidean space, $M= \mathbb R^3 \setminus B^3$ with boundary $\Sigma = \mathbb S^2$. (Throughout the paper, we reserve the notation $\mathbb S^2$ for the unit sphere and $S^2$ for a topologically sphere.) The Bartnik boundary data of $(\mathbb R^3 \setminus B^3, g_{\mathbb E})$ consists of the standard round metric on the unit sphere $\gamma_{\bS^2}$ and constant mean curvature $H_{g_\bE}=2$. In \cite{Miao:2003}, Miao proves that for arbitrary data $(\tau,\phi)$ on $\bS^2$ which is reflection-symmetric and close to  $(\gamma_{\mathbb S^2}, 2)$, there exists a solution to the boundary value problem \eqref{orig} and \eqref{bdry}. Without the assumption of reflectional symmetry, Anderson \cite{Anderson:2015static} shows that the kernel of the linearized problem at $(\mathbb R^3 \setminus B^3, g_{\mathbb E})$ is ``trivial''  in the sense \eqref{eq:kernel} below. In \cite{An-Huang:2022} An and Huang prove the existence and local uniqueness of the boundary value problem for data  close to the Bartnik data of $(\bR^n\setminus\Omega, g_{\bE})$ where $\Omega$ is any bounded star-shaped domain with smooth boundary in $\mathbb R^n$. In~\cite{An-Huang:2024}, they extend the results for a generic static vacuum manifold, including generic exterior regions of Schwarzschild manifolds. 

For each $m\in \mathbb R$, we denote $m_0:=\max\{ 0, m\} \ge 0$. The family of 3-dimensional Schwarzschild metrics  and the corresponding static potentials on $(2m_0, \infty)\times S^2$ are given by
\begin{equation*}
    \mathfrak{g}_{\sc}= \left(1-\tfrac{2m}{r}\right)^{-1}dr^2 + r^2 \gamma_{\bS^2} \quad \mbox{ and } \quad   f_{\sc} = \sqrt{1-\tfrac{2m}{r}}
\end{equation*}
where $r\in (2m_0, \infty)$ is the radial coordinate.  (For $m<0$,  $r=0$ is a singularity, which the above definition excludes.) 
Let $r_0> 2m_0$, and consider $M=(r_0, \infty)\times S^2$, which we may also denote by $\mathbb R^3\setminus B_{r_0}$. The Bartnik data of the static vacuum manifold $(\bR^3\setminus B_{r_0},\gsc)$  on the boundary is given by 
\begin{align*}
\gamma_{r_0} := \gsc^\intercal=r_0^2\gamma_{\bS^2} \quad \mbox{ and } \quad H_{r_0} := H_{\gsc}=\tfrac{2}{r_0}\left(1-\tfrac{2m}{r_0}\right)^{\frac{1}{2}}.
\end{align*}
The work of \cite{An-Huang:2024} implies that for a generic value of $r_0$ in $(2m_0, \infty)$, there exist a static vacuum extension for Bartnik data close to the one mentioned above. The main purpose of the paper is to extend the result for \emph{all} $r_0\in(2m_0,\infty)$, using the implicit function theorem  established in~\cite[Theorem 3]{An-Huang:2024}.  However, we emphasize that extending the existence result from a generic $r_0$ to all $r_0$ is considerably more challenging and requires new insights into the geometry of Schwarzschild manifolds, which we explore in this paper. We note that a separate, self-contained proof is provided by the third author \cite{Ellithy}.

\begin{Theorem}\label{th:existence}
Let $m_0:=\max\{ 0, m\}$, $\alpha\in (0, 1)$, and $q\in (\frac{1}{2}, 1)$. For each $r_0\in (2 m_0, \infty)$, there exist positive constants $\epsilon_0,C$ such that for each $\epsilon\in (0, \epsilon_0)$, if $(\tau,\phi)$ satisfies $\|(\tau,\phi)-(\gamma_{r_0},H_{r_0})\|_{\C^{2,\a}(S^2)\times \C^{1,\alpha}(S^2)}<\epsilon$, then there exists an asymptotically flat,  static vacuum pair $(\g, f)$ with $\|(\g,f)-(\mathfrak{g}_{\sc},f_{\sc})\|_{\C^{2,\a}_{-q}(\mathbb R^3\setminus B_{r_0})}<C\epsilon$ solving the boundary condition $(\g^\intercal, H_{\g})=(\tau, \phi)$ on $S^2$. Furthermore, the solution $(\g, f)$ is geometrically unique in a neighborhood of $(\mathfrak{g}_{\sc},f_{\sc})$ in $\C^{2,\alpha}_{-q}(\mathbb R^3\setminus B_{r_0})$ and  depend smoothly  on $(\tau, \phi)$ in a suitable gauge.  
\end{Theorem}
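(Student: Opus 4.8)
The plan is to derive Theorem~\ref{th:existence} from the implicit function theorem of \cite[Theorem 3]{An-Huang:2024}, which already assembles the Fredholm theory, the weighted H\"older estimates, the gauge-fixing, and the smooth dependence on the data. The only nontrivial hypothesis of that theorem is the triviality, in the sense of \eqref{eq:kernel}, of the kernel of the linearized Bartnik boundary value problem at the background $(\mathfrak{g}_{\sc}, f_{\sc})$. Thus everything reduces to a linear uniqueness statement: any asymptotically flat solution $(h, v)$ of the static vacuum equations linearized at $(\mathfrak{g}_{\sc}, f_{\sc})$ whose induced-metric and mean-curvature variations both vanish on $\Si = \{r = r_0\}$ must be trivial. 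The entire new content of the paper, and the reason earlier work was confined to a generic set of radii, lies in verifying this kernel condition for \emph{every} $r_0 \in (2m_0, \infty)$.

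First I would linearize \eqref{orig} at $(\mathfrak{g}_{\sc}, f_{\sc})$ and impose the gauge normalization built into the framework of \cite{An-Huang:2024}. Exploiting the spherical symmetry of the Schwarzschild background, I would expand $(h, v)$ into harmonics on $S^2$; this decouples the PDE kernel problem into a family of linear ODE systems in the radial variable $r$, indexed by the harmonic degree $\ell$. For each $\ell$ the asymptotically flat solutions (those decaying at infinity compatibly with the weight $-q$) form a finite-dimensional space, and I would impose the vanishing of the two pieces of linearized Bartnik data at $r = r_0$. The low modes demand separate handling: the $\ell = 0$ mode is spanned by the linearization along the Schwarzschild mass parameter, which is exactly the geometrically trivial direction encoded in \eqref{eq:kernel}, while the $\ell = 1$ modes are governed by the residual gauge freedom and are eliminated by the normalization.

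The decisive step is the family of modes $\ell \ge 2$, where I would need to show that the decaying radial solution never satisfies both boundary conditions simultaneously at $r = r_0$. This is precisely where resonances can arise: for isolated radii the asymptotically flat solution could accidentally have vanishing metric- and mean-curvature variations at $r_0$, producing a spurious kernel element and forcing the ``generic $r_0$'' restriction in \cite{An-Huang:2024}. I expect ruling out these resonances for \emph{all} $r_0$ to be the main obstacle, and it is where the specific geometry of Schwarzschild must enter. After an explicit change of variables I would reduce the radial equations to hypergeometric- or Legendre-type ODEs, and then aim to prove a sign-definiteness or monotonicity property of the relevant Wronskian-type boundary functional as a function of $r$, showing it has no zeros on $(2m_0, \infty)$. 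Establishing this nonvanishing — plausibly through an energy identity or an integral formula intrinsic to the static vacuum structure — is the delicate point; once it holds, the kernel is trivial for every $r_0$, the linearized operator is invertible by the index-zero Fredholm property, and the implicit function theorem yields existence, local geometric uniqueness, and smooth dependence, completing the proof.
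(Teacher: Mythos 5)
Your reduction of Theorem~\ref{th:existence} to the kernel condition \eqref{eq:kernel} via \cite[Theorem 3]{An-Huang:2024} is exactly the paper's first step, and you correctly locate the difficulty in ruling out resonant radii for the higher modes. But from that point on the proposal defers precisely the step that constitutes the content of the paper: you say you would ``aim to prove'' nonvanishing of a Wronskian-type boundary functional for all $r_0$, ``plausibly through an energy identity,'' without supplying the mechanism. Decomposing the full linearized system $(h,v)$ directly into tensor harmonics produces coupled radial ODE systems whose non-resonance for every $r_0$ is exactly the open issue; restating it does not close it. The paper's route is different and is what makes the problem tractable: it conformally transforms $(\mathfrak g,f)\mapsto(f^2\mathfrak g,\ln f)$, uses the umbilic CMC radial foliation of Schwarzschild to put the deformation in a \emph{global} geodesic gauge (Proposition~\ref{Ggauge}), and derives structure equations that reduce the entire kernel question to the vanishing of the single scalar $\tilde u$ (Lemma~\ref{zeroG}). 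Each spherical-harmonic coefficient of $\tilde u$ then satisfies one scalar second-order ODE \eqref{ode-333} with an initial condition at $r_0$ coming from the Bartnik data, and triviality for all $r_0$ and all $\ell$ is proved by an elementary comparison/monotonicity analysis (Lemma~\ref{le:ode} and the ensuing case analysis), not by special functions --- though the authors remark that the associated Legendre reduction you propose is possible.

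Two of your claims about the low modes are also incorrect as stated. The $\ell=0$ part of the kernel is not ``spanned by the linearization along the Schwarzschild mass parameter'': that deformation changes $H_{r_0}$, so it fails the linearized boundary conditions, and it is not of the form $\big(\cL_X\gsc, X(f_{\sc})\big)$ with $X$ vanishing on $\Si$, so it would be a \emph{nontrivial} kernel element if it were one; the paper must and does prove the $\ell=0$ coefficient vanishes. Likewise the $\ell=1$ modes are not ``eliminated by the normalization'': after the gauge is fixed they still require the monotonicity argument for $\Phi(r)$ in Section~\ref{se:vanish}. So the skeleton of your argument is right, but the decisive analytic input --- and the correct treatment of $\ell=0,1$ --- is missing.
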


See Section~\ref{se:conformal} for the notation of weighted H\"older spaces. To explain the proof, we consider the map for an asymptotically flat pair $(\g, f)$ associated with our boundary value problem:
\[
    (\g, f) \mapsto  T(\g, f) = \begin{array}{l}  
 \left\{ \begin{array}{l} f\Ric_{\g} -  \text{Hess}_{\g } f \\ 
                                  \Delta_{\g} f
	\end{array} \right. \quad \mbox{ in } \bR^3\setminus B_{r_0}\\
\left\{ \begin{array}{l} \g^\intercal \\ H_{\g} 
	\end{array} \right. \quad \mbox{ on } \partial B_{r_0}
	\end{array} .
\]
Theorem~\ref{th:existence} reduces to showing that that $T(g, f) = (0, 0, \tau, \phi)$. The linearization of the above map at $  (\g, f)=(\g_{\sc}, f_{\sc})$  is given by
\begin{align}\label{eq:linear}
\begin{split}
	L(\tilde \g,\tilde f) = \begin{array}{l}  
 \left\{ \begin{array}{l} f_{\sc}\Ric'_{\gsc} (\tilde \g) +\tilde f\Ric_{\gsc}- \text{Hess}_{\gsc} \tilde f-\text{Hess}'_{\gsc}(\tilde \g) f_{\sc} \\ 
                                  \Delta_{\gsc} \tilde f+ \Delta'_{\gsc}(\tilde \g)f_{\sc}
	\end{array} \right. \quad \mbox{ in } \bR^3\setminus B_{r_0}\\
\left\{ \begin{array}{l} \tilde \g^\intercal \\ H'_{\gsc} (\tilde \g)
	\end{array} \right. \quad \mbox{ on } \partial B_{r_0}
	\end{array}.
\end{split}
\end{align}
where $(\tilde \g, \tilde f)$ denotes the infinitesimal deformation of $(\g, f)$. (Technically speaking, we should have labeled  $L$ by $L_{r_0}$, but since we will fix $r_0$ in the argument,  it should not cause confusion.) Here and throughout the paper, we use a prime to denote the linearization of a geometric quantity at $\gsc$; for example, $\Ric'_{\gsc} (\tilde \g)=\left.\tfrac{d}{dt}\right|_{t=0}\Ric_{\gsc+t\tilde \g}$.

    Theorem 3 in \cite{An-Huang:2024} states that if the kernel of $L$ is ``trivial'' in the sense that  it only consists of infinitesimal deformations generated by  diffeomorphisms that fix the boundary and preserve the asymptotically flat structure, then Theorem~\ref{th:existence} holds. More precisely, the kernel of $L$ is ``trivial'' if    
\begin{align} \label{eq:kernel}
    \Ker L =\big \{ \big( \cL_X \gsc, X(f_{\sc}) \big)\big\}
\end{align}
where $X$ is a vector field vanishing on the boundary $\partial B_{r_0}$ and $X$ asymptotic to $Z$ at infinity for some Euclidean Killing vector field $Z$. See the precise definition of the space  of $X$ below in \eqref{eq:vector}. Therefore, our main task to Theorem~\ref{th:existence} is to establish the following theorem. 


\begin{Theorem}\label{Main1}
Let $m_0:=\max\{ 0, m\}$. For all $r_0\in (2 m_0, \infty)$,  the linearized operator at $(\bR^3\setminus B_{r_0}, \mathfrak{g}_{\sc}, f_{\sc})$ has a trivial kernel as described above by \eqref{eq:kernel}.
\end{Theorem}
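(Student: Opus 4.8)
The inclusion $\supseteq$ in \eqref{eq:kernel} is automatic: since $T$ is covariant under diffeomorphisms and the boundary operators $(\g^\intercal, H_{\g})$ are preserved under diffeomorphisms fixing $\partial B_{r_0}$, every pair $(\cL_X \gsc, X(f_{\sc}))$ with $X$ as in \eqref{eq:vector} already lies in $\Ker L$. The whole content is therefore the reverse inclusion. My plan is to exploit the $SO(3)$-symmetry of $\gsc$ to separate variables: since $L$ commutes with the $SO(3)$-action, a kernel element can be analyzed one spherical-harmonic mode at a time, and the problem reduces to a family of linear ODE systems in the radial variable $r$, indexed by the harmonic degree $\ell$. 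For each $\ell$ I would then show that the space of solutions decaying at infinity, once the two boundary conditions $\tilde\g^\intercal=0$ and $H'_{\gsc}(\tilde\g)=0$ at $r=r_0$ are imposed, contains only the restriction of a diffeomorphism deformation.

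To set up the separation of variables rigorously, I would first combine the interior linearized static-vacuum equations with a gauge condition --- a Bianchi/harmonic-type gauge adapted to the static potential --- so that the resulting system is elliptic; standard elliptic regularity together with the asymptotically flat decay then shows that a kernel element is smooth and admits a convergent expansion in scalar-, vector-, and tensor-type spherical harmonics on $\mathbb S^2$. Because $L$ preserves both the harmonic type and the degree $\ell$, and because the gauge transformations available to us (Lie derivatives along $X$ vanishing on $\partial B_{r_0}$ and asymptotic to Euclidean Killing fields) respect this decomposition, it suffices to treat each mode independently and, within each mode, to use the residual gauge freedom to normalize the radial profile.

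The mode-by-mode analysis then proceeds in three regimes. For $\ell=0$ (spherically symmetric deformations) the relevant decaying solutions form a low-dimensional space spanned by the variation of the mass parameter $m$ and a radial diffeomorphism; since $H_{r_0}=\tfrac{2}{r_0}(1-\tfrac{2m}{r_0})^{1/2}$ depends nontrivially on $m$, the condition $H'_{\gsc}(\tilde\g)=0$ forces the mass-variation component to vanish, leaving only a boundary-fixing diffeomorphism. For $\ell=1$ the decaying solutions correspond to infinitesimal ``recentering'' deformations, which I expect to be purely gauge, i.e.\ generated by an $X$ asymptotic to a Euclidean translation. The essential case is $\ell\ge2$, where genuine tensorial degrees of freedom appear: there the radial ODE selects a space of decaying solutions whose dimension matches the number of boundary conditions, so that vanishing of a suitable boundary determinant is exactly the condition for a nontrivial kernel element to exist.

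The main obstacle is to rule out such nontrivial $\ell\ge2$ solutions for \emph{every} $r_0\in(2m_0,\infty)$, not merely for generic $r_0$. For generic $r_0$ a transversality or dimension-count argument suffices, but proving nonvanishing of the boundary determinant uniformly in $r_0$ --- including the degenerate near-horizon limit $r_0\to 2m_0$ --- is precisely where the special structure of Schwarzschild must be used. I anticipate two complementary tools: (i) explicit closed-form solutions of the radial ODEs, available because the Schwarzschild radial operator has a distinguished algebraic structure, from which the sign of the determinant can be read off directly; and (ii) an integral identity of Pohozaev/Rellich type, using the static potential $f_{\sc}$ as a multiplier, producing a definite-sign quantity that forces any solution meeting both homogeneous boundary conditions to vanish identically. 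Establishing this nonvanishing for all $r_0$ is the delicate heart of the argument and the part I expect to require the most care.
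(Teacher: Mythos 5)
Your outline identifies the right general strategy (exploit the $SO(3)$-symmetry, separate into spherical harmonics, reduce to radial ODEs, treat $\ell=0$, $\ell=1$, $\ell\ge 2$ separately), but as written it has a genuine gap: the entire content of the theorem --- ruling out nontrivial decaying solutions for \emph{every} $r_0\in(2m_0,\infty)$ --- is deferred to two ``anticipated tools'' (explicit closed-form radial solutions, or a Pohozaev/Rellich identity with $f_{\sc}$ as multiplier), neither of which is carried out. Since the passage from ``generic $r_0$'' to ``all $r_0$'' is exactly what distinguishes this result from what was already known, a proposal that stops at ``this is the delicate heart and will require the most care'' has not proved the statement. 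Moreover, your plan attacks the full coupled tensorial system in scalar-, vector-, and tensor-type harmonics, where the claimed matching of dimensions and the nonvanishing of a ``boundary determinant'' are asserted but not established; tracking the residual gauge freedom mode by mode in that setting is itself nontrivial.

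The paper avoids all of this by two structural reductions you are missing. First, it conformally transforms to $(g,u)=(f^2\mathfrak g,\ln f)$ and imposes a \emph{global} geodesic gauge (possible because Schwarzschild is foliated by umbilic CMC spheres, Proposition~\ref{Ggauge}); the linearized Ricatti/Gauss/Codazzi structure equations then show that the whole kernel element vanishes once the single scalar $\tilde u$ does (Lemma~\ref{zeroG}). This collapses the tensorial problem to one scalar second-order ODE per harmonic coefficient $a_{k\ell}(r)$, with an initial condition at $r_0$ extracted from the Bartnik boundary conditions --- there is no determinant to compute and no vector/tensor harmonics to handle. Second, the triviality of decaying solutions for all $r_0$ and all $\ell$ is obtained not from explicit Legendre-type solutions or an integral identity, but from an elementary ODE comparison and monotonicity argument (Lemma~\ref{le:ode} together with a sign/case analysis on the auxiliary functions $A$, $B$, $\phi$, $\Phi$). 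If you want to complete your proposal along your own lines, you would need to actually produce either the closed-form solutions with a sign analysis of the connection coefficients uniform in $r_0$ (including $r_0\to 2m_0$), or a genuinely coercive multiplier identity; as it stands the argument is a plausible plan rather than a proof.
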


Our approach to  Theorem \ref{Main1} involves conformally transforming $(\g, f)$ to $(g, u) := (f^2 \g, \ln f)$, with $g$ in geodesic coordinates, so that the conformally transformed static equation for $(g, u)$ in three dimensions resembles the Ricci-flat equation. Taking advantage of the fact that a Schwarzschild manifold has a global foliation by umbilic, constant mean curvature spheres, we find a \emph{global} geodesic gauge for infinitesimal deformations. The linearization of the conformally transformed equations then yields \emph{structure equations} for the infinitesimal deformation $(\tilde g, \tilde u)$, evolving along the radial direction. Then to show that the kernel of the linearized operator is trivial as in \eqref{eq:kernel}, it suffices to prove \(\tilde{u} = 0\) in the global geodesic gauge. See Section~\ref{se:conformal} for details.

We show $\tilde u=0$ in Section~\ref{se:vanish}. We decompose $\tilde u$  into spherical harmonics and derive second-order differential equations for their coefficients from the structure equations with initial conditions given by the Bartnik boundary conditions. By establishing an ODE comparison result and examining the underlying properties of the ODE, we show that solutions decaying at infinity must vanish identically.

\section{Conformal formulation and structure equations} \label{se:conformal} 

For each $m$, we let $m_0:=\max\{ 0, m\}$. We denote by $(\mathfrak{g_{\sc}}, f_{\sc})$ the Schwarzschild static vacuum pair with mass $m$ defined on  $(2m_0, \infty)\times S^2$  as follows:
\begin{equation*}
    \gsc = \left(1-\tfrac{2m}{r}\right)^{-1}dr^2 + r^2 \gamma_{\bS^2} \quad \mbox{ and } \quad   f_{\sc} = \sqrt{1-\tfrac{2m}{r}}
\end{equation*}
where $r\in (2m_0, \infty)$ is the radial coordinate. Let $r_0 > 2m_0$. Denote $M:=  \bR^3\setminus B_{r_0}$ and  the boundary $\Si\cong S^2$.  The manifold $M$ is naturally equipped with the standard Cartesian coordinates $\{x^1,x^2,x^3\}$, and using this we can define the weighted H\"older space $\mathcal{C}^{k,\alpha}_{-q}(M)$, where $k = 0, 1, 2, \dots$, $\alpha \in (0,1)$, and $q \in \R$. The space $\C^\infty_{-q}(M)$ denotes the intersection of $\C^{k,\alpha}_{-q}(M)$ for all $k$.  We refer to the definition of weighted H\"older spaces in, for example, \cite[Section 2.1]{An-Huang:2022}. For $f \in \CC{k}{-q}$, we will sometimes write $f = O^{k,\alpha}(|x|^{-q})$ to emphasize the asymptotic rate. We slightly abuse notation and say that a tensor is in $\CC{k}{-q}$ if all of its component functions in Cartesian coordinates are in $\CC{k}{-q}$.
Throughout the paper we will assume that $q\in (\frac{1}{2}, 1)$. 

A Riemannian metric $\mathfrak{g}$ on $M$ is called asymptotically flat if $\mathfrak{g} - g_{\mathbb{E}} \in \CC{2}{-q}$, where $g_{\mathbb{E}}$ the Euclidean metric. If $\mathfrak{g}$ is asymptotically flat and $f$ is a positive scalar function such that $f-1 \in \CC{2}{-q}$, we refer to $(\mathfrak{g},f)$ as an \emph{asymptotically flat pair}. We denote by $\mathcal{M}(M)$ the space of asymptotically flat pairs  on $M$. Obviously both static vacuum pairs $(g_{\mathbb{E}}, 1)$ and $(\gsc, f_{\sc})$ are in $\mathcal{M}(M)$. We use $\mathfrak{g}^\intercal$ and $H_{\mathfrak{g}}$ to denote the induced metric and mean curvature of the boundary $\Sigma$, respectively. The mean curvature is defined by $H_{\mathfrak{g}} := \Div \nu_{\mathfrak{g}}$ where $\nu_{\mathfrak{g}}$ is the unit normal vector field pointing toward infinity.  
 

We recall the defintions of $T$  and specify the function spaces: 
\begin{align*}
&T: \mathcal M(M) \to \C^{0,\alpha}(M)\times \C^{2,\alpha}(\Sigma)\times \C^{1,\alpha}(\Sigma)\\
    &(\g, f) \mapsto  T(\g, f)=\begin{array}{l}  
 \left\{ \begin{array}{l} f\Ric_{\g} -  \text{Hess}_{\g } f \\ 
                                  \Delta_{\g} f
	\end{array} \right. \quad \mbox{ in } M\\
\left\{ \begin{array}{l} \g^\intercal \\ H_{\g} 
	\end{array} \right. \quad \mbox{ on } \Si
	\end{array} .
\end{align*}
The \emph{infinitesimal deformation} $(\tilde \g, \tilde f)$ belongs to the tangent space $T_{(\gsc, f_{\sc})} \mathcal M(M)$, which we identity as $\C^{2,\alpha}(M)$. Then the functions spaces for the linearized map $L$ at $(\gsc, f_{\sc})$, as defined by \eqref{eq:linear}, are given by $L: \C^{2,\alpha}(M)\to \C^{0,\alpha}(M)\times \C^{2,\alpha}(\Sigma)\times \C^{1,\alpha}(\Sigma)$. To prove Theorem~\ref{Main1}, our main task is to analyze the kernel space 
\[
\Ker L :=\Big\{ (\tilde \g, \tilde f) \in \C^{2,\alpha}_{-q} (M): L(\tilde \g, \tilde f)=0\Big\}.
\]

Following \cite[Definition 2.1]{An-Huang:2024}, we define the subgroup of $\C^{3,\alpha}_{\mathrm{loc}}$ diffeomorphisms of $M$ that fix the boundary and the structure at infinity as 
\begin{align*}
    \mathcal{D}(M) = \Big\{ \psi \in \CC{3}{\mathrm{loc}}  \, :\, &\psi |_{\Sigma} = \mathrm{Id}_{\Sigma}, \text{ and } \psi(x) - Ox= O^{3,\alpha}(|x|^{1-q})\\
    & \text{ for some constant matrix $O \in SO(3)$} \Big\}.
\end{align*}
Let $\mathcal{X}(M)$ be the tangent space of $\mathcal{D}(M)$ at the identity map:
    \begin{align}\label{eq:vector}
    \begin{split}
        \mathcal{X}(M) = \Big\{ X \in \CC{3}{\mathrm{loc}} : & \text{ $X=0$ on $\Sigma$ and $X-Z = O^{3,\alpha}(|x|^{1-q})$ }\\ &\text{ for some Euclidean  rotational Killing vector field $Z$} \Big\}.
    \end{split}
    \end{align}
Since the equation $T(\g, f) = (0, 0, \tau, \phi)$ are invariant under diffeomorphisms in $\cD(M)$; namely, $T(\psi^* \g, \psi^*f)= T(\g, f)$ for $\psi\in \cD(M)$. By taking the $t$-derivative of $T(\psi_t^* \g, \psi_t^*f)= T(\g, f)$ for any smooth path $\psi_t\in \cD(M)$,  it follows that  
\[
L\big(\mathcal{L}_{X} \gsc, X (f_{\sc}) \big)=0 \quad \mbox{for any $X\in\cX(M)$}. 
\]

We  note a regularity lemma for $\Ker L$.
\begin{lemma}[{Cf. Corollary 3.11 of \cite{An-Huang:2024}}]\label{reg}
If $(\tilde \g, \tilde f) \in \Ker L$, then there exists $X\in\cX(M)$ such that $(\tilde \g, \tilde f)+ \big(\mathcal{L}_{X} \gsc, X(f_{\sc}) \big)\in\C^{\infty}_{-q}(M)$. 
\end{lemma}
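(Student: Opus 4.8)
The plan is to exploit the fact that $L$ fails to be elliptic only because of its invariance under the gauge group $\cD(M)$: I would break the gauge by a Bianchi-type condition, recognize the resulting system as a genuine elliptic boundary value problem, bootstrap its regularity, and absorb the gauge correction into a single field $X\in\cX(M)$. Concretely, let $\cB_{\gsc}$ denote the Bianchi operator $\cB_{\gsc}(h)=\Div_{\gsc}h-\tfrac12 d(\tr_{\gsc}h)$ on symmetric $2$-tensors, and recall that the linearized Ricci tensor decomposes as $\Ric'_{\gsc}(h)=-\tfrac12\Delta_{\gsc}h-\delta^{*}_{\gsc}\cB_{\gsc}(h)$ modulo zeroth-order curvature terms. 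Adding the gauge term $\big(f_{\sc}\,\delta^{*}_{\gsc}\cB_{\gsc}(\tilde\g),\,0\big)$ to the interior component of $L$ cancels the degenerate $\delta^{*}\cB$ part and produces a modified operator $L_{\cB}$ whose interior principal symbol is triangular with invertible Laplace-type diagonal blocks on $(\tilde\g,\tilde f)$ (the $\tilde f$-block being $|\xi|^2$ and the $\tilde\g$-block being $-\tfrac{f_{\sc}}{2}|\xi|^2$, with $f_{\sc}>0$), hence elliptic. The input I would take from \cite{An-Huang:2024} is that the Bartnik conditions $\tilde\g^\intercal=0$, $H'_{\gsc}(\tilde\g)=0$ together with the gauge boundary conditions satisfy the Lopatinski--Shapiro complementing condition, so that $L_{\cB}$ is a well-posed elliptic boundary value problem.

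Next I would put a given $(\tilde\g,\tilde f)\in\Ker L$ into this gauge by solving the linear problem
\begin{equation*}
\cB_{\gsc}(\cL_X\gsc)=-\cB_{\gsc}(\tilde\g)\ \text{ in }M,\qquad X=0\ \text{ on }\Si,\qquad X-Z=O^{3,\alpha}(|x|^{1-q})\ \text{ at infinity,}
\end{equation*}
for $X\in\cX(M)$. The operator $X\mapsto\cB_{\gsc}(\cL_X\gsc)$ is second-order elliptic (a Bochner Laplacian plus a curvature term), and Fredholm theory in the weighted H\"older spaces $\CC{k}{-q}$ — with $q\in(\tfrac12,1)$ a non-exceptional weight — yields such an $X$, at worst up to a finite-dimensional, \emph{smooth} obstruction. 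Set $\hat\g:=\tilde\g+\cL_X\gsc$ and $\hat f:=\tilde f+X(f_{\sc})$. Because $Z$ is rotational and $(\gsc,f_{\sc})$ is rotationally symmetric, $\cL_Z\gsc=0$ and $Z(f_{\sc})=0$, so $\cL_X\gsc=\cL_{X-Z}\gsc$ and $X(f_{\sc})=(X-Z)(f_{\sc})$ decay at rate $-q$; hence $(\hat\g,\hat f)$ remains in $\CC{2}{-q}$.

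Finally I would bootstrap. By diffeomorphism invariance $(\cL_X\gsc,X(f_{\sc}))\in\Ker L$, so $(\hat\g,\hat f)\in\Ker L$; and by construction $\cB_{\gsc}(\hat\g)$ is smooth (identically zero if the gauge equation is solved exactly). Therefore $L_{\cB}(\hat\g,\hat f)=L(\hat\g,\hat f)+\big(f_{\sc}\,\delta^{*}_{\gsc}\cB_{\gsc}(\hat\g),0\big)$ is smooth, with smooth coefficients, homogeneous Bartnik data, and the gauge boundary conditions. Starting from $\CC{2}{-q}$, the weighted interior and boundary Schauder estimates for the elliptic problem $L_{\cB}$ upgrade $(\hat\g,\hat f)$ to $\CC{k}{-q}$ for every $k$, hence to $\C^\infty_{-q}(M)$. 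Since $(\hat\g,\hat f)=(\tilde\g,\tilde f)+\big(\cL_X\gsc,X(f_{\sc})\big)$, this is exactly the assertion of the lemma.

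The main obstacle is the boundary analysis behind the first paragraph: verifying that the Bartnik conditions together with the gauge conditions meet the complementing condition — so that the boundary Schauder estimates and the bootstrap are valid — and that the gauge field $X$ can be found within $\cX(M)$ with the prescribed Dirichlet and asymptotic behavior. These are precisely the contents of the elliptic theory developed in \cite[Section 3]{An-Huang:2024}, and the work here is to check that that theory applies with $(\gsc,f_{\sc})$ in place of the generic static vacuum background, using only that $\gsc$ is smooth and that the linearized operators depend locally on the (smooth) background coefficients.
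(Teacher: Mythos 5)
The paper offers no independent proof of this lemma---it is imported verbatim from Corollary 3.11 of \cite{An-Huang:2024}---and your gauge-breaking-plus-elliptic-bootstrap argument (Bianchi gauge term to restore ellipticity, Lopatinski--Shapiro check for the Bartnik plus gauge boundary conditions, weighted Schauder bootstrap) is precisely the mechanism behind that corollary, so your proposal is correct and takes essentially the same route. The one step deserving extra care is the solvability of the gauge equation for $X$ within $\cX(M)$: this is exactly where the rotational Killing fields permitted at infinity are needed to absorb the cokernel of the vector Laplacian at the weight $1-q$, as in the cited reference.
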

\begin{remark}
By the above lemma, to prove Theorem~\ref{Main1}, it suffices to consider kernel elements of $L$ that are $\C^\infty_{-q}(M)$.     
\end{remark}



\subsection{Conformal transformation}
Given $(\mathfrak{g}, f)\in\cM(M)$, we define 
\be\label{conformal}
g= f^2 \mathfrak{g} \quad \mbox{ and }\quad  \ u= \ln f.
\ee
Let $\cM_c(M)$ denote the space of pairs $(g,u)$ consisting of asymptotically flat metrics $g$ and scalar functions $u\in \C^{2,\a}_{-q}(M)$. It is straightforward to check the above conformal transformation above gives a one-to-one correspondence between $\cM(M)$ and $\cM_c(M)$. There is also a one-to-one correspondence between deformations $(\tilde \g,\tilde f)\in T_{(\gsc, f_{\sc})}\mathcal M(M)$ and deformations $(\tilde g,\tilde u)\in T_{(g_{\sc},u_{\sc})}\cM_c(M)$ given by 
\be\label{conformaldef}
\tilde g=f^2_{\sc}\tilde \g+2 \tilde f f_{\sc}\gsc \quad \mbox{ and } \quad  \tilde u=f^{-1}_{\sc} \tilde f.
\ee

The pair $(\mathfrak g, f)$ being static vacuum is equivalent to the pair $(g,u)$ solving the following system 
\begin{equation}\label{conformalorig}
\begin{cases}
    \Ric_g - 2du\otimes du=0\\
    \Delta_g u=0
    \end{cases}
    \ \ \mbox{ in }M.
\end{equation}
The Bartnik boundary data of $(M,\mathfrak{g})$ transforms as follows:
\be\label{conformalbdry}
\mathfrak{g}^\intercal
=e^{-2u}g^\intercal\quad \mbox{ and } \quad  \ H_{\mathfrak g}=e^u \big( H_g-2\nu_g(u)\big) \quad \mbox{ on }\Si.
\ee

Let $(g_{\sc},u_{\sc})$ denote the pair obtained by conformal transformation of the Schwarzschild pair $(\gsc, f_{\sc})$. Explicitly, 
\begin{align}\label{eq:Sch}
   g_{\sc} = dr^2+(r^2-2mr)\g_{S^2} \quad \mbox{ and } \quad u_{\sc} = \tfrac{1}{2} \ln \left(1 - \tfrac{2m}{r} \right). 
\end{align}

Let $r_0 > 2m_0 = \max \{0, 2m\}$ and $M:= \mathbb R^3\setminus B_{r_0}$. We consider the associated boundary value map $T_c$ for  $(g, u)$ 
\begin{align*}
&T_c: \mathcal M_c(M) \to \C^{0,\alpha}(M)\times \C^{2,\alpha}(\Sigma)\times \C^{1,\alpha}(\Sigma)\\
    &(g, u) \mapsto  T_c(g, u)= \begin{array}{l}  
 \left\{ \begin{array}{l} \Ric_g - 2du\otimes du \\ 
                                  \Delta_{g} u
	\end{array} \right. \quad \mbox{ in } M\\
\left\{ \begin{array}{l} e^{-2u} g^\intercal \\ e^u (H_g - 2\nu_g (u))
	\end{array} \right. \quad \mbox{ on } \Si
	\end{array} .
\end{align*}
The linearization of $T_c$ at $(g_{\sc}, u_{sc})$ is given by 
\begin{align}\label{conformalLinear}
\begin{split}
  L_c (\tilde g, \tilde u)= \begin{array}{l} \left\{\begin{array}{l}  \Ric'_{g_{\sc}}(\tilde g) - 2d\tilde u\otimes du_{\sc}-2du_{\sc}\otimes d\tilde u\\
    \Delta_{g_{\sc}} \tilde u+\Delta'_{g_{\sc}}(\tilde g) u_{\sc}
\end{array} \right. \quad \mbox{ in }M\\
\left\{ \begin{array}{l} 
   e^{-2u_{\sc}}  \big(\tilde g^\intercal-2\tilde u g_{\sc}^\intercal \big)\\
    e^{u_{\sc}} \Big( \tilde u \big( H_{g_{\sc}}-2\partial_r u_{\sc}\big)+ H'_{g_{\sc}}(\tilde g)-2\partial_r \tilde u-2\nu'_{g_{\sc}}(\tilde g) (u_{\sc}) \Big)
\end{array}\right. \quad \mbox{ on }\Si
\end{array}.
\end{split}
\end{align}


\begin{lemma}\label{hv-gu}
Let $(\tilde \g,\tilde f)$ and $(\tilde g,\tilde u)$ satisfy the relation \eqref{conformaldef}. Then 
\begin{itemize}
\item $L(\tilde \g,\tilde f) =0$ if and only if $L_c(\tilde g,\tilde u)=0$.
    \item $(\tilde \g,\tilde f) = (\cL_X \gsc, X(f_{\sc}))$ for some $X\in\cX(M)$ if and only if $(\tilde g, \tilde u)= (\cL_X g_{\sc},X(u_{\sc}) )$ for the same vector field $X$. 
\end{itemize}
\end{lemma}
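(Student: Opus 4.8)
The plan is to treat the conformal transformation \eqref{conformal} as a map $\Phi:\cM(M)\to\mathcal M_c(M)$, $\Phi(\g,f)=(f^2\g,\ln f)$, whose differential at $(\gsc,f_{\sc})$ is precisely the isomorphism \eqref{conformaldef}, and to show that $\Phi$ intertwines the two boundary value maps $T$ and $T_c$ in a controlled way. First I would record the standard conformal transformation formulas in dimension three to express the interior components of $T_c(\Phi(\g,f))$ in terms of those of $T(\g,f)$. Using $u=\ln f$, one computes $\text{Hess}_\g u + du\otimes du = f^{-1}\text{Hess}_\g f$ and $\Delta_\g u + |du|_\g^2 = f^{-1}\Delta_\g f$, which together with the conformal formulas for $\Ric$ and $\Delta$ yield
\begin{align*}
\Ric_g - 2\,du\otimes du &= f^{-1}\big(f\Ric_\g - \text{Hess}_\g f\big) - f^{-1}(\Delta_\g f)\,\g,\\
\Delta_g u &= f^{-3}\,\Delta_\g f ,
\end{align*}
where $(g,u)=\Phi(\g,f)$. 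The boundary components require no computation: by \eqref{conformalbdry} the pair $\big(e^{-2u}g^\intercal,\, e^u(H_g-2\nu_g(u))\big)$ equals $(\g^\intercal,H_\g)$ for corresponding pairs. Hence there is a fiberwise linear map $\mathcal A_{(\g,f)}$ on the common target space, block-diagonal between interior and boundary components, acting as the point-independent identity on the boundary block and by the interior transformation above (which is invertible since $f_{\sc}>0$ on $M$), such that $T_c\circ\Phi = \mathcal A_{(\g,f)}\circ T$ as an identity of maps on $\cM(M)$.

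For the first bullet I would then differentiate this identity at $p=(\gsc,f_{\sc})$. By the product and chain rules,
\[
L_c\circ D\Phi|_p = \big(D\mathcal A|_p(\cdot)\big)\big(T(p)\big) + \mathcal A_p\circ L .
\]
The crucial point, and the one worth isolating, is that the first term vanishes: since $(\gsc,f_{\sc})$ is static vacuum, the interior block of $T(p)$ is zero, while the boundary block of $\mathcal A$ is the point-independent identity, so $D\mathcal A|_p$ has no boundary component. Thus $L_c\circ D\Phi|_p=\mathcal A_p\circ L$. Because $f_{\sc}>0$ on $M$ (as $r_0>2m_0$), both $\mathcal A_p$ and $D\Phi|_p$ are invertible, and therefore $\Ker L$ and $\Ker L_c$ correspond under $D\Phi|_p$; that is, $L(\tilde\g,\tilde f)=0$ if and only if $L_c(\tilde g,\tilde u)=0$.

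For the second bullet I would use the naturality of $\Phi$ under pullback: for any $\psi\in\cD(M)$ one has $\Phi\circ\psi^*=\psi^*\circ\Phi$, since $\psi^*(f^2\g)=(\psi^*f)^2\psi^*\g$ and $\ln(\psi^*f)=\psi^*\ln f$. Choosing a path $\psi_t\in\cD(M)$ with $\psi_0=\mathrm{Id}$ and $\tfrac{d}{dt}\big|_0\psi_t=X\in\cX(M)$ and differentiating $\Phi(\psi_t^*\gsc,\psi_t^*f_{\sc})=\psi_t^*(g_{\sc},u_{\sc})$ at $t=0$ gives
\[
D\Phi|_{(\gsc,f_{\sc})}\big(\cL_X\gsc,\,X(f_{\sc})\big)=\big(\cL_X g_{\sc},\,X(u_{\sc})\big).
\]
Equivalently, this single identity can be checked directly from \eqref{conformaldef}, using $\cL_X(f_{\sc}^2\gsc)=f_{\sc}^2\cL_X\gsc+2f_{\sc}X(f_{\sc})\,\gsc$ and $f_{\sc}^{-1}X(f_{\sc})=X(u_{\sc})$. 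Since $D\Phi|_p$ is a linear isomorphism, the equivalence in the second bullet follows immediately in both directions.

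The computations themselves are routine; the only genuinely substantive step is the observation underlying the first bullet—that linearizing at a static vacuum background annihilates the $D\mathcal A$ term—so that the intertwining passes to the linearizations with an invertible $\mathcal A_p$. This is what guarantees that the kernels correspond, rather than merely the zero sets of $T$ and $T_c$.
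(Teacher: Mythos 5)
Your proof is correct. For the second bullet you land on essentially the paper's argument: the paper verifies $D\Phi|_{(\gsc,f_{\sc})}\big(\cL_X\gsc, X(f_{\sc})\big)=\big(\cL_X g_{\sc}, X(u_{\sc})\big)$ by exactly the Leibniz computation from \eqref{conformaldef} that you offer as the ``direct check,'' and then concludes using invertibility of \eqref{conformaldef}; your pullback-naturality derivation $\Phi\circ\psi^*=\psi^*\circ\Phi$ is a clean repackaging of the same fact. The genuine difference is in the first bullet, which the paper dismisses with ``The first item is obvious.'' You supply the argument that sentence leaves implicit: the dimension-three conformal identities
\[
\Ric_g-2\,du\otimes du=f^{-1}\big(f\Ric_{\mathfrak g}-\mathrm{Hess}_{\mathfrak g}f\big)-f^{-1}(\Delta_{\mathfrak g}f)\,\mathfrak g,\qquad \Delta_g u=f^{-3}\Delta_{\mathfrak g}f
\]
(both of which check out), together with \eqref{conformalbdry}, give the intertwining $T_c\circ\Phi=\mathcal A\circ T$ with $\mathcal A$ fiberwise invertible and point-independent on the boundary block; differentiating at the background then kills the $D\mathcal A$ cross term precisely because the interior block of $T(\gsc,f_{\sc})$ vanishes while the boundary block of $\mathcal A$ is constant. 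You are right that this is the substantive step --- equality of zero sets of $T$ and $T_c$ alone would not transfer kernels of the linearizations, since $T(\gsc,f_{\sc})$ is not zero on the boundary block --- and your write-up makes explicit what the paper's ``obvious'' tacitly relies on. No gaps.
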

\begin{proof}
The first item is obvious. For the second item, we assume $(\tilde \g,\tilde f) = (\cL_X \gsc, X(f_{\sc}))$. Then by \eqref{conformaldef} we have
\begin{equation*}\begin{split}
\tilde g&=f^2_{\sc}\cL_X\gsc+2X(f_{\sc}) f_{\sc}\gsc=f^2_{\sc}\cL_X(f^{-2}g_{\sc})+2X(f_{\sc}) f^{-1}_{\sc}g_{\sc}=\cL_X g_{\sc} \\
\tilde u&=f^{-1}_{\sc} X(f_{\sc})=X(\ln f_{\sc})=X(u_{\sc}).
\end{split}\end{equation*}
The reverse can be obtained similarly by using the inverse transformation of \eqref{conformaldef} given by $\tilde \g=f_{\sc}^{-2}\tilde g-2\tilde u \gsc$ and $\tilde f=f_{\sc}\tilde u$. 
\end{proof}

\subsection{Global  geodesic gauge and structure equations} 
From now on, we  consider the boundary value problem at the conformally changed background pair $(g_{\sc}, u_{\sc})$ given by 
\begin{align}\label{eq:Sch}
   g_{\sc} = dr^2+(r^2-2mr)\g_{S^2} \quad \mbox{ and } \quad u_{\sc} = \tfrac{1}{2} \ln \left(1 - \tfrac{2m}{r} \right). 
\end{align}

Let $(\tilde g, \tilde u)$ be an infinitesimal deformation of $(g_{\sc}, u_{\sc})$, namely, $(\tilde g, \tilde u)\in T_{(g_{\sc}, u_{\sc})} \mathcal M_c (M)$. Conventionally, $\tilde g$ is said to satisfy the geodesic gauge if $\tilde g(\partial_r, \cdot)=0$, and it can be always obtained (by adding an infinitesimal deformation generated by a diffeomorphism) in a collar neighborhood of the boundary for a general manifold. See, for example, \cite[Lemma 2.8]{An-Huang:2024}. The advantage of using a geodesic gauge is that the information $\tilde g$ is fully encoded in the tangential components on constant $r$ level sets. For Schwarzschild manifolds, we can actually obtain the geodesic gauge on the entire $M$,  using the special geometric properties of $(M, g_{\sc})$.

\begin{definition}
A symmetric $(0,2)$-tensor $\tilde g$  defined on $(M, g_{\sc})$ is said to satisfy the \emph{global geodesic gauge} if $\tilde g(\partial_r, \cdot)=0$ everywhere in $M$.
\end{definition}

\begin{proposition}\label{Ggauge}
Let $\tilde g\in \C^\infty_{-q}(M)$ be a symmetric $(0,2)$-tensor on $(M, g_{\sc})$. Then there exists a unique vector field $X\in \C^\infty_{1-q}(M)$ vanishing on the boundary such that $\tilde g+\cL_X g_{\sc}$ satisfies the global geodesic gauge. 
\end{proposition}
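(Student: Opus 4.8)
The plan is to convert the gauge condition $\bigl(\tilde g + \cL_X g_{\sc}\bigr)(\partial_r,\cdot)=0$ into a first–order transport system along the radial geodesics and to integrate it. Write the background as the warped product $g_{\sc}=dr^2+h(r)\gamma$ with $h(r)=r^2-2mr$ and $\gamma$ the round metric, and decompose $X=X^r\partial_r+X^A\partial_{\theta^A}$ into its radial and spherical parts. The essential structural input is that the level sets $\{r=\text{const}\}$ are the umbilic constant–mean–curvature spheres with $\partial_r$ their unit geodesic normal, so the only nonvanishing mixed Christoffel symbols are $\Gamma^A_{rB}=\tfrac{h'}{2h}\delta^A_B$. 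A direct computation from the coordinate formula for the Lie derivative then gives
\[
(\cL_X g_{\sc})(\partial_r,\partial_r)=2\,\partial_r X^r,\qquad (\cL_X g_{\sc})(\partial_r,\partial_A)=h(r)\,\gamma_{AB}\,\partial_r X^B+\partial_A X^r .
\]
Hence the global geodesic gauge is equivalent to the triangular system $\partial_r X^r=-\tfrac12\tilde g(\partial_r,\partial_r)$ together with $h(r)\gamma_{AB}\,\partial_r X^B=-\tilde g(\partial_r,\partial_A)-\partial_A X^r$, which determines $X^r$ first and then $X^A$.

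Next I would build $X$ by integrating this system in $r$ with the initial condition $X=0$ on $\Sigma=\{r=r_0\}$: first $X^r(r,\cdot)=-\tfrac12\int_{r_0}^r\tilde g(\partial_r,\partial_r)\,ds$, and then $X^A$ by one further radial integration after dividing by $h$ and inserting $\partial_A X^r$. Since this is a linear ODE in $r$ with smooth coefficients (the warping factor $h$ is smooth and positive on $[r_0,\infty)$) and smooth forcing built from $\tilde g\in\C^\infty_{-q}$, standard ODE theory yields a unique $\C^\infty$ solution attaining the prescribed boundary value. Uniqueness of $X$ in the proposition then follows immediately: the difference of two solutions solves the homogeneous system with zero data at $r_0$, and integrating the homogeneous equations shows it vanishes identically.

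The substantive part, and the step I expect to be the main obstacle, is establishing the claimed weighted regularity $X\in\C^\infty_{1-q}(M)$ from this construction. For the radial component it is immediate: $\tilde g(\partial_r,\partial_r)=O(|x|^{-q})$ integrates, since $q<1$, to $X^r=O(|x|^{1-q})$. The delicate point is the angular component, where the mixed datum $\tilde g(\partial_r,\partial_{\theta^A})$ only carries the weaker pointwise bound $O(|x|^{1-q})$ because one power of $|x|$ is picked up in passing from Cartesian components to coordinate components on the large spheres. This is precisely where the warped geometry must be exploited: dividing by $h(r)\sim r^2$ turns the forcing into a quantity of size $O(|x|^{-1-q})$, and one must track this together with the contribution of $\partial_A X^r$ fed in from the first equation, then convert the resulting coordinate bounds back into the Cartesian weighted H\"older norms and bootstrap derivatives through the ODE using the $\C^\infty_{-q}$ regularity of $\tilde g$. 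Carrying out this weighted bookkeeping — in particular pinning down the sharp angular decay rate produced by the radial integration and verifying it is compatible with both the boundary condition and the weight $1-q$ — is the heart of the argument and the place where the special geometry of Schwarzschild, rather than a generic collar computation, is genuinely needed.
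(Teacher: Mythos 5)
Your construction is essentially identical to the paper's: the same triangular radial ODE system, solving for the radial component first and then the angular part by a further integration, with zero initial data on $\Sigma$; using coordinate components $X^A$ rather than the paper's parallel-transported orthonormal frame $\{e_A\}$ (whose umbilicity term $-\tfrac12 H_{\sc}\,g_{\sc}(X^\intercal,e_A)$ is exactly your $h'\gamma_{AB}X^B$ absorbed differently) is only a cosmetic difference. The weighted regularity you single out as the main obstacle is dismissed as straightforward in the paper, and your own sketch --- dividing the forcing by $h\sim r^2$ and integrating in $r$ with $q<1$ --- is already the correct bookkeeping, so there is no genuine gap.
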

\begin{proof}
Note that $r$ is the coordinate along the unit speed geodesic for $g_{\sc}$ and  $\nabla_{\partial_r} \partial_r = 0$. Let $\{e_A\}_{A=1,2}$ be an orthonormal frame on a constant $r$ sphere, and then we parallel transport $e_{A}$ along the $r$ coordinate. It is straightforward to check that $\{e_A\}$ is tangent to all constant $r$ spheres. 
We can decompose a vector field $X $ on $M$ as $X=X^{\perp}\partial_r+X^\intercal$ where $X^{\perp}=g_{\sc}(X,\partial_r)$ is the normal component and $X^\intercal = \sum_{A} g_{\sc} (X, e_A) e_A$ is tangent to the constant $r$ spheres.  
For given $\tilde g$, we will choose $X^\perp$ and $X^\intercal$ suitably so that 
\[
    \tilde g (\partial_r, \cdot) + \cL_X g_{\sc}(\partial_r, \cdot) = 0.
\]

By direct computations, we have 
\begin{align*}
        \big(\cL_X g_{\sc}\big)(\partial_r, \partial_r ) &=2g_{\sc}(\nabla_{\partial_r}X,\partial_r)=2\partial_r X^{\perp}\\
        \big(\cL_X g_{\sc}\big)(\partial_r,e_A)&=g_{\sc}(\nabla_{\partial_r}X,e_A)+g_{\sc}(\nabla_{e_A}X,\partial_r)\\
&=\partial_r g_{\sc}(X^\intercal,e_A)-g_{\sc}(X,\nabla_{\partial_r}e_A)+e_A(X^\perp)-g_{\sc}(X,\nabla_{e_A}\partial_r)\\
&=\partial_r g_{\sc}(X^\intercal,e_A)+e_A(X^\perp)-\tfrac{1}{2}H_{\sc}g_{\sc}(X^\intercal,e_A)
\end{align*}
where in the last line we use $\nabla_{\partial r} e_A=0$ and the constant $r$ spheres are umbilic, and $H_{\sc}$ denotes the mean curvature of the constant $r$ spheres.  

We determine $X$ as follows. Let $X^\perp = 0$ on $\Sigma = \{ r = r_0\}$, and define $X^\perp (r, \theta)$
\[
        X^\perp (r, \theta) = -\frac{1}{2} \int_{r_0}^r \tilde g(\partial_r, \partial_r) (s, \theta)\, ds. 
\]
Once $X^\perp$ is determined on $M$, we define $X^\intercal$ so that each coefficient $g_{\sc}(X, e_A)$ is the unique solution solving the ODE in $r$ with the initial condition $g_{\sc}(X, e_A)= 0$ on $\Sigma$:
\[
\partial_r g_{\sc}(X^\intercal,e_A)-\tfrac{1}{2}H_{\sc}g_{\sc}(X^\intercal,e_A)=-\tilde g(\partial_r, e_A) -e_A(X^\perp).
\]
It is straightforward to verify that $X$ satisfies the desired regularity. 

\end{proof}

We remark that the above proposition  can be viewed as a linear version of the existence result of global radial foliations for metrics near $g_{\sc}$ obtained in  \cite[Theorem 5.0.1]{CK}.

To derive the key structure equations, we introduce the following notations.
\begin{notation}\label{notations}
We shall work with the constant $r$ foliation of $(M,g_{\sc})$ and use the following notations to describe the intrinsic and extrinsic geometry of the constant $r$ spheres, denoted by $S_r$. 

\begin{itemize}
\item We denote the induced metric $g_{\sc}^\intercal$ on $S_r$ by $\gamma_{\sc}= r(r-2m) \gamma_{\bS^2}$, where $\gamma_{\bS^2}$ is the metric of the unit sphere. We denote the infinitesimal deformation of $\gamma_{\sc}$ by $\tilde \gamma$; that is, $\tilde \gamma = \tilde g^\intercal$, where $\tilde g$ is the infinitesimal deformation of $g_{\sc}$. 
\item The scalar curvature of $S_r$ is $R_{\gamma_{\sc}} = \frac{2}{r(r-2m)}$ with the linearization at $\gamma_{\sc}$ denoted by $R'_{\gamma_{\sc}}(\tilde \gamma)$.
\item Let $\nu_{\sc} = \partial_r$ donote the unit normal to $S_r$. The second fundamental form on $S_r$ is given by $K_{\sc}=\frac{(r-m)}{r(r-2m)}\gamma_{\sc}$ and its traceless part is $\ring K_{\sc} = 0$. The mean curvature on $S_r$ is $H_{\sc} = \frac{2(r-m)}{r(r-2m)}$. We denote their linearizations at $g_{\sc}$ by $\tilde K, \widetilde {\ring K}$, and $\tilde H $, respectively. That is, 
 $\tilde H = H'_{g_{\sc}}(\tilde g)$ and similarly for other terms.

\item We add a slash on geometric operators with respect to $\gamma_{\sc}$ on $S_r$; for example, the covariant derivative $\slashed\nabla$, divergence $\slashed{\Div}$, Laplace operator $\slashed\Delta$, and trace  $\slashed{\tr}$.
\end{itemize}
\end{notation}
The following proposition describes the linearized conformal static vacuum equations in the radial foliation for deformations in the global geodesic gauge.
\begin{proposition}\label{structure}
Let $(\tilde g,\tilde u)$ solve the bulk equations equal to zero in \eqref{conformalLinear}; namely, 
\begin{align}
   \Ric'_{g_{\sc}}(\tilde g) - 2d\tilde u\otimes du_{\sc}-2du_{\sc}\otimes d\tilde u &= 0 \label{eq:conformal1}\\
    \Delta_{g_{\sc}} \tilde u+\Delta'_{g_{\sc}}(\tilde g) u_{\sc} &= 0.\label{eq:conformal2}
\end{align}
Suppose $\tilde g$ satisfies the global geodesic gauge (thus, $\tilde g = \tilde \gamma$). Then $(\tilde \gamma, \tilde u)$ satisfies the following structure equations on $M$:
\begin{subequations}\label{eq:DG}
\begin{align}
	\partial_r \tilde H+ H_{\sc} \tilde H  + 4(\partial_r u_{\sc})( \partial_r \tilde u)& = 0  \label{DG2}\\
	- 4(\partial_r u_{\sc}) (\partial_r \tilde u)&   = -H_{\sc}\tilde H +R_{\gamma_{\sc}}'(\tilde \gamma) \label{DG4}\\
	2(\partial_r u_{\sc}) \slashed d\tilde u &= \slashed \Div (\widetilde{\ring K})   -\frac{1}{2} \slashed d\tilde H \label{DG5}\\	
 \cL_{\pa_r}\w{\ring K}&=0\label{DG3}\\
	\pa^2_r \tilde u+H_{\sc}\pa_r \tilde u + \slashed \Delta \tilde u +  (\partial_r u_{\sc}) \tilde H  &= 0. \label{DG1}
\end{align}
\end{subequations}

If we further assume the zero boundary boundary conditions on $\Sigma$ from \eqref{conformalLinear}:
\begin{align*} 
   &\tilde g^\intercal-2\tilde u \gamma_{\sc} =0\\
   & \tilde u \big( H_{g_{\sc}}-2\partial_r u_{\sc}\big)+ \tilde H-2\partial_r \tilde u-2\nu'_{g_{\sc}}(\tilde g) (u_{\sc})=0,
\end{align*}
then $(\tilde \gamma, \tilde u)$ satisfies 
\begin{align}\label{DG6} 
&\tilde \gamma = 2 \tilde u \gamma_{\sc} \quad \mbox{ and } \quad \tilde H= 2\partial_r \tilde u- \tfrac{2}{r_0} \tilde u \quad \mbox{ on } \Sigma.
\end{align}
\end{proposition}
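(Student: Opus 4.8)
The plan is to derive the five structure equations by projecting the linearized bulk equations \eqref{eq:conformal1}--\eqref{eq:conformal2} onto the orthogonal splitting $TM = \mathbb R\,\partial_r \oplus TS_r$ induced by the foliation, and then to read off the boundary relations \eqref{DG6} directly from the zero Bartnik conditions by substituting the global geodesic gauge. Throughout I would use that $\tilde g$ satisfies the global geodesic gauge, so $\tilde g(\partial_r,\cdot)=0$ and $\tilde g = \tilde\gamma$ is a pure tangential deformation; this is exactly what makes all the $\partial_r$-components of $\tilde g$ vanish and lets the second fundamental form and mean curvature of $S_r$ be the only carriers of the normal-derivative information.

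\textbf{Deriving the structure equations.} The key tool is the decomposition of $\Ric'_{g_{\sc}}(\tilde g)$ in an adapted frame. First I would recall the Riccati/Gauss--Codazzi framework for a metric written in geodesic normal form $g_{\sc}=dr^2+\gamma_{\sc}(r)$: the evolution of the induced metric is $\partial_r\gamma_{\sc}=2K_{\sc}$, the Riccati equation governs $\partial_r K_{\sc}$, the Gauss equation relates the intrinsic scalar curvature $R_{\gamma_{\sc}}$ to $H_{\sc}$ and $|K_{\sc}|^2$, and the Codazzi equation relates $\slashed\Div K_{\sc}$ to the mixed components of $\Ric_{g_{\sc}}$. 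Linearizing each of these at $g_{\sc}$ (using $\ring K_{\sc}=0$, so that $K_{\sc}=\tfrac12 H_{\sc}\gamma_{\sc}$) produces the left-hand sides of \eqref{DG2}--\eqref{DG3}:
\begin{itemize}
\item The $(\partial_r,\partial_r)$-component of \eqref{eq:conformal1}: here $\big(\Ric'_{g_{\sc}}(\tilde g)\big)(\partial_r,\partial_r)$ is the linearized radial Ricci curvature, which via the linearized Riccati equation and the umbilicity $K_{\sc}=\tfrac12 H_{\sc}\gamma_{\sc}$ yields the combination $\partial_r\tilde H+H_{\sc}\tilde H$; the right-hand side $2d\tilde u\otimes du_{\sc}$ contracted twice on $\partial_r$ gives $4(\partial_r u_{\sc})(\partial_r\tilde u)$, producing \eqref{DG2}.
\item The trace of the tangential block, handled through the linearized Gauss equation $R'_{\gamma_{\sc}}(\tilde\gamma) = \big(\text{terms in } \tilde H, H_{\sc}\big)$ together with the tangential trace of \eqref{eq:conformal1}, gives \eqref{DG4}; note the right side of \eqref{DG4} has no $d\tilde u\otimes du_{\sc}$ contribution along the tangential trace except through the radial piece, which is why it matches the $-H_{\sc}\tilde H$ combination.
\item The mixed $(\partial_r,e_A)$-component, via the linearized Codazzi equation $\slashed\Div(\widetilde{\ring K})-\tfrac12\slashed d\tilde H$, matched against $2(\partial_r u_{\sc})\slashed d\tilde u$ (the $(\partial_r, e_A)$-contraction of the symmetrized $2d\tilde u\otimes du_{\sc}$, using $du_{\sc}=(\partial_r u_{\sc})\,dr$), gives \eqref{DG5}.
\item The tracefree tangential block: since $\ring K_{\sc}=0$, the linearized Riccati equation for the tracefree part decouples and reduces to the transport equation $\cL_{\partial_r}\widetilde{\ring K}=0$, which is \eqref{DG3}; here one uses that the right-hand side tracefree tangential part of $2d\tilde u\otimes du_{\sc}$ vanishes because $du_{\sc}$ is purely radial.
\item Finally \eqref{DG1} is just the scalar equation \eqref{eq:conformal2} rewritten in the foliation: $\Delta_{g_{\sc}}\tilde u=\partial_r^2\tilde u+H_{\sc}\partial_r\tilde u+\slashed\Delta\tilde u$, and $\Delta'_{g_{\sc}}(\tilde g)u_{\sc}$ reduces, in the global geodesic gauge where $\tilde g(\partial_r,\cdot)=0$ and $u_{\sc}=u_{\sc}(r)$, to the single term $(\partial_r u_{\sc})\tilde H$ coming from the variation of the volume/Christoffel factor in the radial direction.
\end{itemize}

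\textbf{Boundary relations.} For \eqref{DG6} I would substitute the gauge condition into the zero boundary data. The first tensor boundary condition $\tilde g^\intercal-2\tilde u\,\gamma_{\sc}=0$ is immediate since $\tilde g^\intercal=\tilde\gamma$ in the global geodesic gauge, giving $\tilde\gamma=2\tilde u\,\gamma_{\sc}$ on $\Sigma$. For the scalar condition I would evaluate the Schwarzschild quantities on $\Sigma=\{r=r_0\}$: one has $H_{g_{\sc}}=H_{\sc}$ and the term $\nu'_{g_{\sc}}(\tilde g)(u_{\sc})$ must be computed. The crucial simplification is that in the global geodesic gauge the unit normal $\nu_{\sc}=\partial_r$ is \emph{unperturbed} to first order along the normal direction, so $\nu'_{g_{\sc}}(\tilde g)(u_{\sc})=0$ because $u_{\sc}$ depends only on $r$ and the variation of $\nu$ is tangential (the normal variation is controlled by $\tilde g(\partial_r,\partial_r)=0$). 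After this cancellation the scalar condition reads $\tilde u(H_{\sc}-2\partial_r u_{\sc})+\tilde H-2\partial_r\tilde u=0$, and substituting the explicit Schwarzschild values $H_{\sc}=\tfrac{2(r_0-m)}{r_0(r_0-2m)}$ and $\partial_r u_{\sc}=\tfrac{m}{r_0(r_0-2m)}$ gives $H_{\sc}-2\partial_r u_{\sc}=\tfrac{2(r_0-m)-2m}{r_0(r_0-2m)}=\tfrac{2(r_0-2m)}{r_0(r_0-2m)}=\tfrac{2}{r_0}$, yielding $\tilde H=2\partial_r\tilde u-\tfrac{2}{r_0}\tilde u$, which is \eqref{DG6}.

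\textbf{The main obstacle} I anticipate is the bookkeeping of the linearized curvature operators $\Ric'_{g_{\sc}}$ and $\Delta'_{g_{\sc}}$ in the adapted frame --- in particular correctly isolating which terms survive once the global geodesic gauge kills all $\tilde g(\partial_r,\cdot)$ components, and verifying the exact numerical coefficients coming from the umbilic background ($K_{\sc}=\tfrac12 H_{\sc}\gamma_{\sc}$, $\ring K_{\sc}=0$). The cleanest route is to linearize the Gauss--Codazzi--Riccati system rather than to expand $\Ric'$ via the Lichnerowicz operator directly; this sidesteps the need to track full second-order derivative terms and lets each structure equation emerge from a single projected component. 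A secondary delicate point is confirming $\nu'_{g_{\sc}}(\tilde g)(u_{\sc})=0$ in the boundary computation, which relies essentially on the global geodesic gauge and on $u_{\sc}$ being radial.
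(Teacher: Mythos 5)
Your proposal is correct and follows essentially the same route as the paper: linearize the Riccati, double-traced Gauss, and Codazzi identities along the radial foliation, substitute the components of $\Ric'_{g_{\sc}}(\tilde g)$ supplied by the bulk equation (using that $du_{\sc}$ is purely radial and $\nu'_{g_{\sc}}(\tilde g)=0$ in the global geodesic gauge), and obtain \eqref{DG6} from $H_{\sc}-2\partial_r u_{\sc}=\tfrac{2}{r_0}$ on $\Sigma$. The only detail worth adding is that the paper derives the reduction $\Delta'_{g_{\sc}}(\tilde g)u_{\sc}=(\partial_r u_{\sc})\tilde H$ for \eqref{DG1} explicitly from the standard variation formula for the Laplacian together with the mean-curvature variation formula in the geodesic gauge, rather than by the heuristic ``volume factor'' argument you sketch.
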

\begin{proof}
We recall the following basic equations for surfaces within a general 3-dimensional Riemannian manifold.
\begin{equation*}\begin{split}
\Ric_g(\nu,\nu)+\nu(H)+|\ring K|^2+\tfrac{1}{2}H^2&=0 \ \ \mbox{(Ricatti equation)}\\
R_g-2\Ric_g(\nu,\nu)&=|\ring K|^2-\tfrac{1}{2}H^2+R_{\gamma}\ \ \mbox{(double traced Gauss equation)}\\
\Ric_g(\nu)^\intercal&={\Div}_{\gamma}\ring K-\tfrac{1}{2}dH\ \ \mbox{(Codazzi equation)}\\
\Ric_g^\intercal-\tfrac{1}{2}(\tr_\gamma \Ric_g^\intercal)\gamma&=-\nabla_\nu \ring{K}-H \ring{K}.
\end{split}
\end{equation*}
where $\Ric_g(\nu)^\intercal$ and $\Ric^\intercal$ denote the tangential parts of $\Ric(\nu,\cdot)$ and $\Ric(\cdot, \cdot)$, respectively. To see that last equation above, we use the Gauss equation to obtain
\begin{align*}
	\Ric^\intercal &=  -\nabla_\nu K  + \tfrac{1}{2} R_\gamma \gamma - HK 
\end{align*}
and then subtracting the trace term.

We linearize the above equations at $g_{\sc}$.  We use the facts that  $\nu'(\tilde g)=0$ because $\tilde g$ satisfies the global geodesic gauge and that $\ring K_{\sc}=0$. We obtain
\begin{align}\label{eq:str-linear}
\begin{split}
\Ric'_{g_{\sc}}(\tilde g)(\partial r,\partial r)+\partial_r\tilde H+H_{\sc}\tilde H&=0 \\
R'_{g_{\sc}}(\tilde g)-2\Ric'_{g_{\sc}}(\tilde g)(\partial r,\partial r)&=-H_{\sc} \tilde H+R'_{\gamma_{\sc}}(\tilde \gamma)\\
\Ric'_{g_{\sc}}(\tilde g)(\pa_r)^\intercal&=\slashed{\Div}\widetilde{\ring K}-\tfrac{1}{2}\slashed d\tilde H\\
\Ric'_{g_{\sc}}(\tilde g)^\intercal-\tfrac{1}{2}(\tr_{\gamma_{\sc}} \Ric'_{g_{\sc}}(\tilde g)^\intercal)\gamma_{\sc}&=-\nabla_{\pa_r} \w{\ring K}-H_{\sc} \w{\ring K} = -\cL_{\pa_r}\w{\ring K}.
\end{split}
\end{align}
In the last identity, we use the conformal static equation $ \Ric_{g_{\sc}} =2 du_{\sc} \otimes du_{\sc}$ and hence $\Ric_{g_{\sc}}^\intercal=0$. We also use  that $S_r$ is umbilic to rewrite the right hand side as a Lie derivative.

Now, we derive the first four equations in \eqref{eq:DG}. Using that  $(\tilde g,\tilde u)$ is a conformal static vacuum deformation. Using that $(\tilde g, \tilde u)$ satisfies \eqref{eq:conformal1}, we have
\begin{align}\label{eq:static}
\begin{split}
\Ric'_{g_{\sc}}(\tilde g)(\pa_r,\pa_r)&=4(\pa_r u_{\sc}) (\pa_r\tilde u)\\
\Ric'_{g_{\sc}}(\tilde g)(\pa_r)^\intercal&=2(\pa_r u_{\sc}) \slashed d \tilde u\\
\Ric'_{g_{\sc}}(\tilde g)^\intercal&=0\\
R'_{g_{\sc}}(\tilde g)=\tr_{g_{\sc}}\Ric'_{g_{\sc}}-\tilde g^{-1} \Ric_{g_{\sc}}&=4(\pa_r u_{\sc})(\pa_r\tilde u),
\end{split}
\end{align}
where for the last identity we use  $\tr_{g_{\sc}}\Ric'_{g_{\sc}}=4\pa_r\tilde u\pa_r u_{\sc}$ and $\tilde g^{-1} \Ric_{g_{\sc}}=0$ because of the conformal static equation and the global geodesic gauge. Substitute equations from \eqref{eq:static} into \eqref{eq:str-linear} gives the first four equations of \eqref{eq:DG}. 


To obtain \eqref{DG1}, we simply write out \eqref{eq:conformal2} and use the formula for the linearization of the Laplace operator (see, for example \cite[Proposition 1.184]{BESSE}):
\begin{equation*}\begin{split}
0&=\Delta_{g_{\sc}}\tilde u-\<\mathrm{Hess} \, u_{\sc}, \tilde g\>-\<du_{\sc},{\Div}\tilde g\>+\tfrac{1}{2}\<du_{\sc},d(\tr\tilde g)\>\\
&=\Delta_{g_{\sc}}\tilde u-{\Div} (\tilde g(\nabla u_{\sc}))+\tfrac{1}{2}\<du_{\sc},d(\tr\tilde g)\>\\
&=\Delta_{g_{\sc}}\tilde u-{\Div} \big((\pa_r u)\tilde g(\pa_r)\big)+ \tfrac{1}{2}(\pa_r u_{\sc} ) \pa_r(\tr\tilde g)\\
&=\Delta_{g_{\sc}}\tilde u+(\pa_r u_{\sc})\tilde H
\end{split}\end{equation*}
where in the last line  we use the global geodesic gauge and the variation formula of mean curvature in the geodesic gauge (see, e.g. \cite[Appendix A]{An-Huang:2024}). Decomposing the Laplace into the tangential Laplace yields \eqref{DG1}.
\end{proof}

 To conclude this section, we observe that to show $(\tilde g,\tilde u)=0$, with $(\tilde g, \tilde u)$ satisfying $L_c(\tilde g, \tilde u) =0$, it suffices to show that $\tilde u = 0$.
\begin{lemma}\label{zeroG}
    Suppose that $(\tilde g, \tilde u)$ solves $L_c (\tilde g, \tilde u)=0$ and that $\tilde g$ satisfies the global geodesic gauge. If $\tilde u=0$, then $\tilde g=0$. 
\end{lemma}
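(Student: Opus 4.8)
The plan is to run the structure equations of Proposition~\ref{structure} with $\tilde u = 0$, turning them into ODEs along the radial geodesics, and then to integrate these ODEs using the two available pieces of data: the vanishing of the deformation at the boundary and its decay at infinity. Since $\tilde g$ is assumed to be in the global geodesic gauge, we have $\tilde g = \tilde\gamma$ (purely tangential), so it suffices to prove $\tilde\gamma \equiv 0$. Setting $\tilde u = 0$ in the boundary relations \eqref{DG6} gives the initial data $\tilde\gamma = 0$ and $\tilde H = 0$ on $\Sigma = \{r = r_0\}$.

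First I would show $\tilde H \equiv 0$ on all of $M$. With $\tilde u = 0$, equation \eqref{DG2} becomes the homogeneous first-order linear ODE $\partial_r \tilde H + H_{\sc}\tilde H = 0$ along each radial geodesic. Since $\tilde H = 0$ on $\Sigma$, uniqueness of solutions forces $\tilde H \equiv 0$ throughout $M$. (This works uniformly in $m$, including the flat case where $\partial_r u_{\sc} = 0$.) Next I would convert \eqref{DG3} into an ODE for $\tilde\gamma$. In the adapted coordinates $(r,\theta)$, where $[\partial_r,\partial_{\theta^A}] = 0$, the equation $\cL_{\partial_r}\widetilde{\ring K} = 0$ says precisely that the components $\widetilde{\ring K}_{AB}$ are independent of $r$; write $\widetilde{\ring K}_{AB} = c_{AB}(\theta)$. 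On the other hand, in the global geodesic gauge the perturbed metric keeps the form $dr^2 + (\gamma_{\sc} + t\tilde\gamma)$, so the level-sphere second fundamental form linearizes as $\tilde K_{AB} = \tfrac12 \partial_r \tilde\gamma_{AB}$; combined with $\tilde H = 0$ this yields $\widetilde{\ring K}_{AB} = \tfrac12 \partial_r \tilde\gamma_{AB} - \tfrac12 H_{\sc}\tilde\gamma_{AB}$. Hence each component obeys the inhomogeneous linear ODE
\[
    \partial_r \tilde\gamma_{AB} - H_{\sc}\,\tilde\gamma_{AB} = 2\,c_{AB}(\theta),
\]
with $r$-independent right-hand side.

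Finally I would integrate this ODE and extract the conclusion from the two boundary-type conditions. Using $H_{\sc} = \partial_r \ln(r^2 - 2mr)$, the homogeneous solutions are proportional to $r^2 - 2mr$, and for $m \neq 0$ the general solution is $\tilde\gamma_{AB} = (r^2 - 2mr)\big[\tfrac{c_{AB}}{m}\ln(1 - \tfrac{2m}{r}) + d_{AB}\big]$, with integration constant $d_{AB}(\theta)$. The decay $\tilde g \in \C^\infty_{-q}(M)$ translates, via the angular coordinate vectors having Euclidean length $\sim r$, into $\tilde\gamma_{AB} = O(r^{2-q})$; since the homogeneous mode grows like $r^2$ while $q > 0$, this forces $d_{AB} = 0$. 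With $d_{AB} = 0$, evaluating at $r = r_0$ and using $\tilde\gamma_{AB}(r_0) = 0$, together with the fact that the prefactor $(r_0^2 - 2mr_0)\,\tfrac{1}{m}\ln(1 - \tfrac{2m}{r_0})$ is nonzero for every $r_0 > 2m_0$, gives $c_{AB} = 0$ and hence $\tilde\gamma_{AB} \equiv 0$. The flat case $m = 0$ is handled identically: the solution is $\tilde\gamma_{AB} = d_{AB} r^2 - 2 c_{AB} r$, decay kills $d_{AB}$, and $\tilde\gamma_{AB}(r_0) = 0$ kills $c_{AB}$. Thus $\tilde g = \tilde\gamma = 0$.

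I expect the main obstacle to be the asymptotic bookkeeping in the last step: one must check carefully that the decay hypothesis (stated in Cartesian coordinates) rules out the $r^2$ homogeneous mode while leaving the slower particular integral admissible, and that the boundary prefactor $\ln(1 - \tfrac{2m}{r_0})$ never vanishes for $r_0 > 2m_0$ — a factor that degenerates exactly at $m = 0$, requiring the separate flat computation above. The remaining structure equations \eqref{DG4}, \eqref{DG5}, and \eqref{DG1} are automatically consistent with $\tilde\gamma = 0$ and are not needed for this implication.
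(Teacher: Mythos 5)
Your proof is correct, but its second half takes a genuinely different route from the paper's. Both arguments begin the same way: \eqref{DG6} with $\tilde u=0$ gives $\tilde \gamma=0$ and $\tilde H=0$ on $\Sigma$, and \eqref{DG2} propagates $\tilde H\equiv 0$ over all of $M$. The paper then restricts \eqref{DG5} to $\Sigma$ to see that $\widetilde{\ring K}|_\Sigma$ is divergence-free, invokes the rigidity of transverse-traceless tensors on $S^2$ to conclude $\widetilde{\ring K}=0$ on $\Sigma$, propagates this to all of $M$ by \eqref{DG3}, and is left with the homogeneous ODE $\cL_{\pa_r}\tilde\gamma=H_{\sc}\tilde\gamma$ with vanishing initial data. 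You never touch \eqref{DG5}: you read \eqref{DG3} as saying $\widetilde{\ring K}_{AB}=c_{AB}(\theta)$ in adapted coordinates, solve the resulting inhomogeneous ODE for $\tilde\gamma_{AB}$ in closed form, use the decay $\tilde\gamma_{AB}=O(r^{2-q})$ to eliminate the homogeneous $r^2$-mode, and only then use $\tilde\gamma|_\Sigma=0$ together with the nonvanishing of $r_0(r_0-2m)\ln(1-2m/r_0)$ to force $c_{AB}=0$. The trade-off is clear: the paper's argument is a pure initial-value argument from the boundary --- it needs no information at infinity and is uniform in $m$ --- whereas yours consumes the asymptotic decay of the kernel element (which is legitimately available, since kernel elements lie in $\C^{2,\alpha}_{-q}(M)$, indeed in $\C^{\infty}_{-q}(M)$ after Lemma~\ref{reg}) and requires the separate $m=0$ computation; in exchange you avoid both equation \eqref{DG5} and the TT-tensor fact on $S^2$ entirely. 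Your asymptotic bookkeeping (the homogeneous mode grows like $r^2$ while $2-q<2$, and the particular solution is $O(r)=O(r^{2-q})$ since $q<1$) and the nonvanishing of the boundary prefactor for all $r_0>2m_0$, $m\neq 0$, both check out.
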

\begin{proof}
Let $(\tilde g,\tilde u)$ solve \eqref{eq:DG} in $M$ and  \eqref{DG6} on $\Sigma$.  Since $\tilde g$ satisfies the global geodesic gauge,  we just need to show $\tilde \gamma=0$.

Setting $\tilde u=0$ in the boundary equations \eqref{DG6} yields 
\be\label{zeroB}
\tilde \gamma=0\quad \mbox{ and } \quad  \tilde H=0 \qquad \mbox{ on }\Si.
\ee
Restricting  \eqref{DG5} on $\Sigma$, we obtain that $\widetilde{\ring K}$ is divergence free, and thus it must be identically zero (as a TT tensor on $S^2$ must be zero). Thus, 
\be\label{zeroC}
\widetilde{\ring K}=0 \ \mbox{ on }\Si.
\ee
Then  \eqref{DG3} implies $\w{\ring K}=0$ on $M$. Setting $\tilde u=0$ in \eqref{DG2} also implie $\tilde H=0$ on $M$. Then we have  
\[
\w K=\w{\ring K}+\tfrac{1}{2}\tilde H\gamma_{\sc}+\tfrac{1}{2} H_{\sc}\tilde\gamma=\tfrac{1}{2} H_{\sc}\tilde\gamma \quad \mbox{ on }M.
\]
On the other hand, by the definition of the second fundamental form $\cL_{\nu} \gamma=2K$ we have $\cL_{\pa_r}\tilde\gamma=2\w K$. Therefore, $\tilde\gamma$ solves the ODE $\cL_{\pa_r}\tilde\gamma=H_{\sc}\tilde\gamma$  on $M$. It follows that $\tilde \gamma=0$ on $M$.


\end{proof}

\begin{remark}
An alternative proof of the above lemma is to observe that the boundary equations \eqref{zeroB} and \eqref{zeroC} imply that $(\tilde g,\tilde u)$ with $\tilde u=0$ satisfies the Cauchy boundary data in the sense of Theorem 5$^\prime$ in \cite{An-Huang:2024}.
\end{remark}

\section{Vanishing of the potential function deformation}\label{se:vanish}
The goal of this section is to prove the following theorem. 
\begin{theorem}\label{main}
Let $m_0= \max \{ 0, m\}$. For any $r_0>2m_0$, let $M:= \mathbb R^3\setminus B_{r_0}$ and $L_c$ be defined as in \eqref{conformalLinear}. Suppose $(\tilde g, \tilde u)  \in \C^\infty_{-q}(M)$ solves $L_c(\tilde g, \tilde u)=0$ and $\tilde g$ satisfies the global geodesic gauge. Then $\tilde u = 0$ on $M$, and hence $\tilde g=0$ on $M$ by Lemma~\ref{zeroG}. 
\end{theorem}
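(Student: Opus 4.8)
The plan is to show $\tilde u=0$ one spherical-harmonic mode at a time; once this is done, Lemma~\ref{zeroG} immediately yields $\tilde g=0$. By Lemma~\ref{reg} we may assume $(\tilde g,\tilde u)\in\C^\infty_{-q}(M)$. Expand $\tilde u=\sum_l u_l(r)Y_l$ and $\tilde H=\sum_l h_l(r)Y_l$ in scalar spherical harmonics, $\Delta_{\bS^2}Y_l=-l(l+1)Y_l$; since $\gamma_{\sc}=r(r-2m)\gamma_{\bS^2}$, $\slashed\Delta$ acts on $Y_l$ as multiplication by $-\tfrac{l(l+1)}{r(r-2m)}$. I also split the tangential deformation $\tilde\gamma$ into its $\gamma_{\sc}$-trace part $\psi\gamma_{\sc}$ (so $\psi=\tfrac12\slashed{\tr}\tilde\gamma$) and its traceless part $\mathring{\tilde\gamma}$, and record $\widetilde{\mathring K}$.

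First I would integrate the transport equations in \eqref{eq:DG} to express all quantities through $u_l$ and finitely many integration constants. Equation~\eqref{DG2} has integrating factor $r(r-2m)$, because $H_{\sc}=\tfrac{d}{dr}\ln\big(r(r-2m)\big)$; it integrates to $r(r-2m)\tilde H=-4m\tilde u+C$ with $C$ independent of $r$ (a function on $S^2$). Tracing the second-fundamental-form evolution $\cL_{\partial_r}\tilde\gamma=2\widetilde{\mathring K}+\tilde H\gamma_{\sc}+H_{\sc}\tilde\gamma$ gives the clean relation $\partial_r\psi=\tilde H$, with boundary value $\psi=2\tilde u$ on $\Sigma$ from \eqref{DG6}. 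For the tensorial sector, \eqref{DG3} makes the coordinate components of $\widetilde{\mathring K}$ independent of $r$; since $\slashed\Div_{\gamma_{\sc}}=\tfrac1{r(r-2m)}\slashed\Div_{\bS^2}$, equation~\eqref{DG5} forces $\slashed\Div\widetilde{\mathring K}$ to be a pure multiple of $\slashed d Y_l$, so the divergence-free (``magnetic'') part of $\widetilde{\mathring K}$ vanishes and $\widetilde{\mathring K}$ is a scalar-derived tensor with one constant amplitude per mode, tied to $C$. Because $S^2$ admits no nonzero transverse-traceless tensors, this amplitude --- and hence $C$ --- must vanish for $l=0,1$; only for $l\ge2$ can it be nonzero. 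Finally $\mathring{\tilde\gamma}$ is recovered by integrating $\cL_{\partial_r}\mathring{\tilde\gamma}=2\widetilde{\mathring K}+H_{\sc}\mathring{\tilde\gamma}$ from the boundary value $\mathring{\tilde\gamma}=0$.

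Substituting these relations into the two remaining equations closes the system into ODEs in $r$. Inserting $r(r-2m)\tilde H=-4m\tilde u+C$ into \eqref{DG1} gives, for each mode, a second-order ODE of the form
\[
 u_l''+H_{\sc}\,u_l'-\Big[\tfrac{l(l+1)}{r(r-2m)}+\tfrac{4m^2}{r^2(r-2m)^2}\Big]u_l=\tfrac{mC}{r^2(r-2m)^2},
\]
whose zeroth-order coefficient is a nonpositive potential and whose first-order coefficient $H_{\sc}$ is positive. Equation~\eqref{DG4}, rewritten via $R'_{\gamma_{\sc}}(\tilde\gamma)=-\slashed\Delta\psi-R_{\gamma_{\sc}}\psi+\slashed\Div\slashed\Div\mathring{\tilde\gamma}$ together with $\partial_r\psi=\tilde H$ and the integrated $\mathring{\tilde\gamma}$, furnishes the additional relation that pins down $C$ and eliminates the last free constant. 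The boundary condition from \eqref{DG6} contributes the single scalar identity $\tfrac{-4mu_l(r_0)+C}{r_0(r_0-2m)}=2u_l'(r_0)-\tfrac2{r_0}u_l(r_0)$, while the decay $u_l=O(r^{-q})$ selects the $\sim r^{-l-1}$ branch at infinity.

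The crux --- and the reason all $r_0\in(2m_0,\infty)$ can be treated rather than only generic ones --- is the final vanishing step for these ODEs. The idea is an ODE comparison (Sturm / maximum-principle type) argument: exploiting $H_{\sc}>0$ and the nonpositivity of the zeroth-order potential, a solution decaying at infinity is forced into a definite monotonicity and sign pattern, which fixes the sign of the ratio $u_l'(r_0)/u_l(r_0)$; combined with the value of $C$ extracted from \eqref{DG4}, this is incompatible with the boundary identity above unless $u_l\equiv0$. The delicate points are to make the comparison uniform across all modes $l\ge2$ and all radii $r_0$, and to dispose of the exceptional modes $l=0,1$ (where $C=0$, so that the boundary identity together with decay already forces $u_l=0$). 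This is where the fine structure of the Schwarzschild ODE must be used, and it is the main obstacle.
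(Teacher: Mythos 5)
Your reduction to a one-dimensional problem is essentially the paper's: integrating \eqref{DG2} against the factor $r(r-2m)$ to get $r(r-2m)\tilde H+4m\tilde u=C$ with $C$ constant in $r$, substituting into \eqref{DG1}, and closing the initial data via \eqref{DG6} together with \eqref{DG4} restricted to $\Sigma$ is exactly Proposition~\ref{fund}. (The tensor sector $\widetilde{\ring K}$, $\tilde\gamma$ is not needed for this; it only enters in Lemma~\ref{zeroG}, after $\tilde u=0$ is known.) But your argument stops precisely where the theorem's difficulty begins. The step you defer as ``the main obstacle''---that a solution of the resulting ODE with the given initial condition and with decay at infinity must vanish, for every mode $\ell$ and every $r_0\in(2m_0,\infty)$---is the entire content of Proposition~\ref{pr:tilde}, and the sketch you give of it does not go through as stated. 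The ODE is \emph{inhomogeneous}, with source proportional to $C=2\big((4m-r_0)u_\ell(r_0)+r_0(r_0-2m)u_\ell'(r_0)\big)$, itself built from the unknown boundary values. A Sturm/maximum-principle comparison based on the nonpositive potential and positive first-order coefficient applies to the \emph{homogeneous} equation; it does not by itself fix the sign of $u_\ell'(r_0)/u_\ell(r_0)$ for the actual solution, nor does it yield a contradiction with the boundary identity. The paper must subtract explicit particular solutions ($A=a-\alpha_0$ for $\ell\le 1$, $B=a-\beta_0/(r(r-2m))$ for $\ell\ge 2$), prove a tailored comparison result (Lemma~\ref{le:ode}), and carry out a case analysis on the sign of $B(r_0)$ in which the hard subcase is resolved not by monotonicity of $B$ but by a quantitative integral estimate (using $-\tfrac1x\ln(1-x)>1$) showing a certain flux $\Phi$ exceeds a threshold. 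None of this is present in, or substituted for by, your proposal.

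Two further points. Your claim that $C=0$ for $\ell=0,1$ is false for $\ell=0$: combining \eqref{DG6} with \eqref{eq20} gives $C=2(3m-r_0)\,u_0(r_0)$, which is nonzero unless $r_0=3m$ or $u_0(r_0)=0$; so for $\ell=0$ the homogeneous ``decay forces vanishing'' argument you invoke is unavailable (the paper instead solves the $\ell=0$ equation explicitly and finds $u_0(r)\to u_0(r_0)$ at infinity, whence decay forces $u_0(r_0)=0$). For $\ell=1$ your observation that \eqref{DG5} forces the $\ell=1$ part of $r(r-2m)\tilde H+4m\tilde u$ to vanish is correct and would in fact give a legitimate shortcut, since the paper's boundary relations give $C=6m\,u_1(r_0)$ for that mode, forcing $u_1(r_0)=0$ and hence $u_1\equiv0$ by ODE uniqueness; but this helps neither for $\ell=0$ nor for $\ell\ge2$. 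In short: the framework and the reduction are right, but without the ODE analysis of Proposition~\ref{pr:tilde} the theorem is not proved.
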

Assuming Theorem~\ref{main}, we prove Theorem \ref{Main1}. 
\begin{proof}[Proof of Theorem~\ref{Main1}]
Let $(\tilde \g, \tilde f) \in \Ker L$. By Lemma~\ref{reg}, we may without loss of generality assume $(\tilde \g, \tilde f) \in \C^\infty_{-q}(M)$. To show that \eqref{eq:kernel} holds, that is,  $(\tilde \g, \tilde f) = (\cL_X \gsc, X(f_{\sc}))$ for some $ X\in \cX(M)$,   it suffices to show that if $(\tilde g, \tilde u) \in \C^\infty_{-q}(M) $ solving $L_c(\tilde g, \tilde u)=0$, then   $(\tilde g, \tilde u) = (\cL_X g_{\sc}, X(u_{\sc}))$ for some $X\in \cX(M)$ by Lemma~\ref{hv-gu}. By Proposition~\ref{Ggauge}, we can without loss of generality assume $\tilde g$ satisfies the global geodesic gauge. Then the triviality of $(\tilde g, \tilde u)$ follows from Theorem~\ref{main}.

\end{proof}

For the following computations, we recall
\begin{align}\label{eq:explicit}
\begin{split}
    \gamma_{\sc} &= r(r-2m) \gamma_{\bS^2}, \quad \slashed \Delta = \tfrac{1}{r(r-2m)} \Delta_{\gamma_{\bS^2}}\\
 u_{\sc}&=\tfrac{1}{2} \ln \left( 1-\tfrac{2m}{r}\right), \quad \pa_r u_{\sc}=\tfrac{m}{r(r-2m)}, \quad \mbox{ and } \quad  H_{\sc}=\tfrac{2(r-m)}{r(r-2m)}.
\end{split}
\end{align}
To prove Theorem \ref{main}, we decouple the equation \eqref{DG1} for $\tilde u$ and $\tilde H$ to obtain a differential equation with initial condition for $\tilde u$ only. 
\begin{proposition}\label{fund}
Let $(\tilde \gamma, \tilde u)$ solve \eqref{DG2}, \eqref{DG1}, and the boundary condition~\eqref{DG6}. We write $\tilde u = \tilde u(r,\theta)$ where $(r, \theta)\in (r_0,\infty)\times S^2$. Then $\tilde u$  satisfy the following differential equation of $r$:
 \begin{align}\label{eq10}
 \begin{split}
&r(r-2m) \pa^2_r \tilde u (r,\theta)+2(r-m)\pa_r \tilde u (r,\theta)+ \Delta_{\gamma_{\bS^2}} \tilde u (r,\theta)-\frac{4m^2}{r(r-2m)} \tilde u (r,\theta)\\
&= -\frac{2m}{r(r-2m)} \Big((4m-r_0)\tilde u(r_0, \theta) + r_0 (r_0-2m) \partial_r \tilde u (r_0, \theta) \Big). 
\end{split}
\end{align}
If we further assume \eqref{DG4}, then  $\tilde u$ satisfies the following identity on $\Si$:
 \begin{align}
&2 r_0(r_0-2m) \partial_r \tilde u (r_0, \theta) + r_0 \Delta_{\gamma_{\mathbb S^2}} \tilde u (r_0,\theta)+ 2m   \tilde u (r_0, \theta) =0 \label{eq20}.
\end{align}
\end{proposition}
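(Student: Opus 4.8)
The plan is to decouple $\tilde H$ from $\tilde u$: I solve \eqref{DG2} as a first-order linear ODE in $r$ and feed the result into \eqref{DG1}, so that the coupled system collapses to a single second-order equation for $\tilde u$ whose inhomogeneity is frozen at the boundary $r=r_0$.

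For \eqref{eq10}, I first multiply \eqref{DG1} through by $r(r-2m)$ and substitute the explicit profiles \eqref{eq:explicit}. Since $r(r-2m)H_{\sc}=2(r-m)$, $\ r(r-2m)\slashed\Delta=\Delta_{\gamma_{\bS^2}}$, and $r(r-2m)\,\partial_r u_{\sc}=m$, this collapses to the identity $r(r-2m)\,\partial_r^2\tilde u+2(r-m)\,\partial_r\tilde u+\Delta_{\gamma_{\bS^2}}\tilde u=-m\tilde H$. It then remains to express $m\tilde H$ through $\tilde u$ and the boundary data. The key observation is that $H_{\sc}$ is precisely the logarithmic derivative of $r(r-2m)$, so $r(r-2m)$ is an integrating factor for \eqref{DG2}, which becomes $\frac{d}{dr}\big(r(r-2m)\tilde H\big)=-4m\,\partial_r\tilde u$. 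Integrating from $r_0$ to $r$ and inserting the boundary value $\tilde H(r_0)=2\partial_r\tilde u(r_0)-\tfrac{2}{r_0}\tilde u(r_0)$ from \eqref{DG6} gives a closed formula for $r(r-2m)\tilde H(r)$ in terms of $\tilde u(r)$, $\tilde u(r_0)$, and $\partial_r\tilde u(r_0)$. Substituting $-m\tilde H$ into the collapsed identity and moving the resulting $\tfrac{4m^2}{r(r-2m)}\tilde u$ term to the left-hand side produces \eqref{eq10}.

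For \eqref{eq20}, I restrict \eqref{DG4} to $\Sigma=\{r=r_0\}$ and evaluate each term with \eqref{eq:explicit}. The only term needing work is $R'_{\gamma_{\sc}}(\tilde\gamma)$. On $\Sigma$ the boundary relation $\tilde\gamma=2\tilde u\,\gamma_{\sc}$ from \eqref{DG6} makes the induced-metric deformation purely conformal, so I apply the standard linearization of scalar curvature $R'_\gamma(h)=-\slashed\Delta(\slashed{\tr}\,h)+\slashed\Div\slashed\Div\,h-\langle h,\Ric_\gamma\rangle$ to $h=2\tilde u\,\gamma_{\sc}$ on the round sphere $(S_{r_0},\gamma_{\sc})$. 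Using $\slashed{\tr}\,\gamma_{\sc}=2$, $\ \slashed\Div\slashed\Div(\phi\,\gamma_{\sc})=\slashed\Delta\phi$, and $\Ric_{\gamma_{\sc}}=\tfrac12 R_{\gamma_{\sc}}\gamma_{\sc}$ with $R_{\gamma_{\sc}}=\tfrac{2}{r_0(r_0-2m)}$, this reduces to $R'_{\gamma_{\sc}}(\tilde\gamma)=-\tfrac{2}{r_0(r_0-2m)}\big(\Delta_{\gamma_{\bS^2}}\tilde u+2\tilde u\big)$. Inserting this together with the boundary value of $\tilde H$ into \eqref{DG4}, clearing the factor $r_0(r_0-2m)$, and rescaling by $r_0/2$ yields \eqref{eq20}.

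Both derivations are elementary once \eqref{eq:explicit} is substituted; the one genuinely structural step—and the part I expect to be the crux—is recognizing the integrating factor $r(r-2m)$ for \eqref{DG2}, which is what converts the coupled pair $(\tilde u,\tilde H)$ into a single closed equation for $\tilde u$ with Cauchy data frozen at $r_0$. The conformal scalar-curvature computation behind \eqref{eq20} is routine but requires care with the Laplacian sign convention, since $\Delta_{\gamma_{\bS^2}}$ here has nonpositive spectrum.
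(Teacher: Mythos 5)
Your proposal is correct and follows essentially the same route as the paper: the paper likewise observes that $r(r-2m)$ integrates \eqref{DG2} (phrased as $r(r-2m)\tilde H+4m\tilde u$ being constant in $r$), inserts the boundary value of $\tilde H$ from \eqref{DG6} into \eqref{DG1} to get \eqref{eq10}, and derives \eqref{eq20} by restricting \eqref{DG4} to $\Sigma$ and computing $R'_{\gamma_{\sc}}(2\tilde u\gamma_{\sc})$ with the same linearization formula. All of your intermediate identities check out against the paper's.
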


\begin{proof}
We reexpress  \eqref{DG2} and \eqref{DG1} by substituting the terms from \eqref{eq:explicit}:
\begin{align*}
    r(r-2m)\partial_r \tilde H+ {2(r-m)} \tilde H + {4m}\pa_r\tilde u=0\\
    \partial_r^2 \tilde u + \frac{2(r-m)}{r(r-2m)} \partial_r\tilde u + \frac{1}{r(r-2m)}\Delta_{\gamma_{\bS^2}} \tilde u+ \frac{m}{r(r-2m)} \tilde H =0
\end{align*}
Note that the first identity implies that  $r(r-2m) \tilde H + 4m\tilde u$ is constant in $r$. Therefore, 
\begin{align*}
  \tilde H (r, \theta) + \frac{4m}{r(r-2m)}\tilde u (r, \theta) &=\frac{r_0(r_0-2m) }{r(r-2m)}\tilde H (r_0, \theta) + \frac{4m}{r(r-2m)}\tilde u (r_0,\theta)\\
  &=\frac{2r_0(r_0-2m)}{r(r-2m)} \left( \frac{4m-r_0}{r_0(r_0-2m)} \tilde u(r_0, \theta) + \partial_r \tilde u(r_0, \theta)\right)
\end{align*}
where we substitute $\tilde H$ by  the boundary condition $\frac{1}{2} \tilde H(r_0, \theta) = -\frac{1}{r_0} \tilde u(r_0, \theta) + \partial_r \tilde u(r_0, \theta)$ from ~\eqref{DG6}. Substituting the identity for $\tilde H$ into the previous differential equation for $\tilde u$ gives
 \begin{align*}
\pa^2_r \tilde u (r,\theta)+&\frac{2(r-m)}{r(r-2m)}\pa_r \tilde u (r,\theta)+ \frac{1}{r(r-2m)}\Delta_{\gamma_{\bS^2}} \tilde u (r,\theta)-\frac{4m^2}{(r(r-2m))^2} \tilde u (r,\theta)\\
&= -\frac{2m r_0(r_0-2m) }{(r(r-2m))^2}\left(\frac{4m-r_0}{r_0(r_0-2m)}\tilde u(r_0, \theta) + \partial_r \tilde u (r_0, \theta) \right). 
\end{align*}
Multiplying the previous identity by $(r(r-2m))^2$ gives \eqref{eq10}.

The  boundary conditions \eqref{DG6} for $(\tilde g, \tilde u)$ do not naturally give a decoupled boundary condition for $\tilde u$. Therefore, we consider \eqref{DG4} on $\Sigma$
\begin{align*}
    R'_{\gamma_{\sc}} (\tilde \gamma) &= -4 (\partial_r u_{\sc} ) (\partial_r\tilde u) + H_{\sc}\tilde H \\
    &=    \frac{-4m}{r_0(r_0-2m)} \partial_r \tilde u (r_0,\theta) + \frac{2(r_0-m)}{r_0(r_0-2m)} \left( -\frac{2}{r_0} \tilde u(r_0, \theta) + 2\partial_r \tilde u(r_0, \theta)\right).
\end{align*}
On the other hand, $\tilde \gamma = 2\tilde u \gamma_{\sc}$, we have  
\begin{align*}
R'_{\gamma_{\sc}}(\tilde \gamma)&=-\slashed\D \big(\slashed\tr(2 \tilde u \gamma_{\sc})\big)+\slashed{\Div}\slashed{\Div}(2 \tilde u \gamma_{\sc})-\langle \Ric_{\g_{\sc}},2 \tilde u \gamma_{\sc}\>\\
&=-2\slashed\D \tilde u-\frac{4}{r_0(r_0-2m)} \tilde u.
\end{align*}
Equating the previous two identities give \eqref{eq20}.
\end{proof}

Let the integers $\ell$ and  $k$ range as $\ell=0,1,2,...$ and $\m=0,\pm 1,...,\pm \ell$. Let  $\{Y_\ml\}$ be the set of $L^2$-orthonormal spherical harmonics on the unit sphere $\bS^2$ satisfying
\[
  \D_{\gamma_{\bS^2}}Y_\ml=-\ell(\ell+1)Y_\ml .
\]
We write $\tilde u$ in the spherical harmonics decomposition:  
\begin{equation}\label{u sph harm}
 \tilde u(r, \theta) = \sum_{\ell=0}^{\infty} \sum_{\m=-\ell}^{\ell}  a_\ml(r) Y_\ml(\theta)
\end{equation}
where the coefficients $a_{\ml}(r)$ are  defined by
\begin{equation}
  a_\ml(r) = \int_{S^2} Y_\ml(x) \tilde u(r,\theta) d\s
\end{equation}
where $d\s$ is the area form of $\gamma_{\bS^2}$. Note that 
\[
    \frac{\partial^j}{\partial r^j} a_\ml(r) = \int_{S^2} Y_\ml(x) \frac{\partial^j}{\partial r^j} \tilde u(r,\theta) d\sigma.
\]
Since $\tilde u = O^{2,\alpha}(r^{-q})$, it follows that $a_\ml(r) = O^{2,\alpha}(r^{-q})$. 

The differential equation \eqref{eq10} and the boundary condition \eqref{eq20} implies the following differential equations for each coefficient $a_\ml(r)$, along with the initial value condition:
\begin{align} \label{ode-333}
\begin{cases} 
&\left(r(r-2m) a_\ml' \right)'=   \left( \frac{4m^2}{r(r-2m)} +\ell(\ell+1)\right)a_\ml(r) \\ 
&\qquad\qquad \qquad  \qquad  -\frac{2m}{r(r-2m)}  \Big((4m-r_0)  a_\ml(r_0)+ r_0 (r_0-2m) a_\ml'(r_0) \Big) \\
& a'_\ml(r_0) = \frac{1}{2(r_0-2m) }\left(\ell(\ell+1)-\frac{2m}{r_0}\right)  a_\ml(r_0)
\end{cases}.
\end{align}

\begin{proposition}\label{pr:tilde}
For each $k, \ell$ ranging as $\ell=0,1,2,...$ and $\m=0,\pm 1,...,\pm \ell$, if $ a_{\ml}$ satisfies \eqref{ode-333} and the asymptotics $|a_{\ml}(r) | + |a_{\ml}'(r)| \to 0$  as $r\to \infty$, then $a_{\ml}$ must be identically zero. 
\end{proposition}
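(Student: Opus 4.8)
The plan is to treat each pair $(k,\ell)$ separately, abbreviating $a=a_\ml$, $\lambda=\ell(\ell+1)$, $p(r)=r(r-2m)>0$, and $a_0=a(r_0)$. First I would record the structural features of \eqref{ode-333}: the coefficient $q:=\lambda+\tfrac{4m^2}{p}$ multiplying $a$ on the right is \emph{nonnegative}, and, after inserting the initial condition, the constant in the source simplifies to $C=(4m-r_0)a_0+p(r_0)a'(r_0)=\kappa\,a_0$ with $\kappa=3m+\tfrac{r_0(\lambda-2)}{2}$. In particular, if $a_0=0$ then the initial condition forces $a'(r_0)=0$ and $C=0$, so \eqref{ode-333} becomes a homogeneous second-order ODE with vanishing Cauchy data at $r_0$; by uniqueness $a\equiv0$. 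Hence it suffices to assume $a_0\neq0$ (say $a_0>0$ after scaling) and derive a contradiction with the decay $|a|+|a'|\to0$.

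Next I would pass to the normal form. Setting $w=\sqrt{p}\,a$ removes the first-order term and yields
\[
w''=Vw-g,\qquad V=\frac{\lambda}{p}+\frac{3m^2}{p^2}\ge 0,\qquad g=\frac{2mC}{p^{3/2}},
\]
while the initial condition becomes the Robin condition $w'(r_0)=\beta_0\,w(r_0)$ with $\beta_0=\tfrac{(\lambda+2)r_0-4m}{2p(r_0)}$. This reduction pays off twice. Since $V\ge 0$, the homogeneous operator is nonoscillatory, so it has a one-dimensional space of recessive (decaying, $\sim r^{-\ell-1}$) solutions spanned by a function $\phi>0$, together with a dominant ($\sim r^{\ell}$) solution; the substitution $a=P/p$ turns the homogeneous equation into $pP''-p'P'+(2-\lambda)P=0$, which has a polynomial solution of degree $\ell+2$ giving the dominant mode explicitly (and $\phi=\tfrac{r-m}{p}$ for $\ell=0$, $\phi=\tfrac1p$ for $\ell=1$). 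Moreover the boundary constant is favorable: using $r_0>2m_0$ one checks $\beta_0>0$ for every $\ell$ and every $m$, whereas the recessive solution has negative logarithmic derivative $w'/w<0$.

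With these in hand I would run a dichotomy on the sign of the source $g=\tfrac{2m\kappa a_0}{p^{3/2}}$. If $m\kappa\le 0$ (hence $g\le0$), a convexity argument closes directly: since $w(r_0)>0$ and $w'(r_0)=\beta_0 w(r_0)>0$, the relation $w''=Vw-g\ge 0$ wherever $w>0$ shows $w$ cannot vanish and is convex, so $w(r)\ge w(r_0)+w'(r_0)(r-r_0)$ grows at least linearly; then $a=p^{-1/2}w\to\beta_0 w(r_0)>0$, contradicting $a\to0$. In the general case I would instead use reduction of order $w=\phi v$, which integrates the inhomogeneity explicitly as $p\phi^2 v'=D-2mC\int_{r_0}^r \phi/p$; imposing decay forces $D$ to kill the dominant mode, leaving a single linear relation $w'(r_0)=\Lambda\,w(r_0)$ for the decaying branch, where $\Lambda$ is an explicit constant built from $\phi(r_0)$, $\phi'(r_0)$, and $\int_{r_0}^\infty\phi/p$. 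A nontrivial decaying solution compatible with the boundary condition then exists if and only if $\Lambda=\beta_0$, so the proposition is equivalent to the strict non-resonance $\Lambda\neq\beta_0$.

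The main obstacle is exactly this non-resonance in the unfavorable regime $m\kappa>0$ (which occurs for all $\ell=1$ with $m\neq0$, and for $\ell\ge2$ with $m>0$), where the correction from $C$ competes with $\beta_0$ and can be of comparable magnitude, so the crude sign comparison used for the homogeneous problem no longer decides the outcome. I expect one must compute $\Lambda$ quantitatively—evaluating $\phi'(r_0)/\phi(r_0)$ and the correction integral $\int_{r_0}^\infty\phi/p$ from the explicit Schwarzschild solutions above—and verify $\Lambda<\beta_0$ (equivalently, rule out equality) for all admissible $r_0$. The delicate point is the near-horizon limit $r_0\to 2m^+$, where $p(r_0)\to0$ and several of these quantities blow up at competing rates, so the verification requires a careful matched estimate exploiting the precise Schwarzschild coefficients; this is where the finer geometric input enters.
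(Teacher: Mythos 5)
Your reduction is set up correctly: the observation that $a_\ml(r_0)=0$ forces $a_\ml\equiv 0$, the computation $C=\kappa a_0$ with $\kappa=3m+\tfrac{r_0(\lambda-2)}{2}$, the Liouville normal form with $V=\tfrac{\lambda}{p}+\tfrac{3m^2}{p^2}\ge 0$, and the Robin constant $\beta_0=\tfrac{(\lambda+2)r_0-4m}{2p(r_0)}>0$ all check out against \eqref{ode-333}. Your convexity argument in the regime $m\kappa\le 0$ is also sound and is, for those parameters, arguably cleaner than what the paper does. (Two small slips: the unfavorable regime $m\kappa>0$ also includes $\ell=0$ with $m<0$, and $\ell=0$ with $0<2m<r_0<3m$, not only $\ell=1$ and $\ell\ge 2$ with $m>0$; and your ``recessive'' functions $\tfrac{r-m}{p}$, $\tfrac1p$ are solutions in the $a$-variable, not the $w$-variable.)

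The genuine gap is that the unfavorable regime --- which contains the entire content of the proposition for $\ell=1$ and for $\ell\ge 2$ with $m>0$ --- is not proved but only reformulated. You correctly reduce it to a non-resonance statement $\Lambda\neq\beta_0$, where $\Lambda$ is the Robin constant of the unique decaying branch of the (self-consistently sourced) equation, but you then write that one ``must compute $\Lambda$ quantitatively \dots and verify $\Lambda<\beta_0$'' without doing so; for $\ell\ge 2$ there is no closed-form recessive solution, so this verification is not a routine evaluation. This is exactly where the paper's proof does its work: it homogenizes the equation in two ways, via $A=a-\alpha_0$ and $B=a-\tfrac{\beta_0}{r(r-2m)}$, and rules out the resonance by monotonicity of the auxiliary quantities $\Phi=\tfrac{\phi'}{r(r-2m)}$ and $F=\tfrac{f'}{r(r-2m)}$ together with the comparison Lemma~\ref{le:ode} and the integral estimate $-\tfrac1x\ln(1-x)>1$, the last of which is precisely the ``matched estimate'' you anticipate but do not supply. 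As it stands, your argument establishes the proposition only when $m\kappa\le 0$, and leaves the main case as a conjecture.
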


When $m=0$, a general solution to the above ODE is given by  $a_{\ml}(r) = c_1 r^{-\ell -1} + c_2 r^\ell$ for some constants $c_1, c_2$. It is direct to verify that the initial value condition implies $c_1=c_2=0$. Thus, for the remainder of the section, we assume $m\neq 0$.  
We begin with an ODE comparison statement.

\begin{lemma}\label{le:ode}
Let $h(r)$ and $p(r)$ be two positive functions defined for $r\ge r_0\ge 0$.  Suppose $B(r)$ solve
\[
	\left( h(r) B' \right) ' =p (r) B\quad \mbox{ on } [r_0, \infty).
\]
\begin{enumerate}
\item Suppose $B(r_0)\ge 0$ and $B'(r_0)\ge 0$ and not both zero. Then $B(r)>0$ and $B'(r)>0$ for all $r> r_0$. \label{it:positive}
\item Suppose $h (r) =r(r-c)$ for some constant $c<r_0$ and $p(r)$ converges to a positive constant as $r\to \infty$. Suppose $B(r)>0$. If $B(r)$ is increasing, then $\lim_{r\to \infty} B(r) = +\infty$. If $B(r)$ is decreasing, then $\lim_{r\to \infty} B(r) = 0$. \label{it:limit}
\end{enumerate}
\end{lemma}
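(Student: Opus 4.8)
The plan is to treat each part as an elementary comparison/maximum-principle argument for the operator $(hB')'$, exploiting that the ``monotone quantity'' $hB'$ has derivative exactly $pB$, together with the sign of $p$.

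For part~(\ref{it:positive}), I would first show that the pair of inequalities $B\ge 0$, $B'\ge 0$ propagates from $r_0$. Integrating the equation gives the identity
\[
h(r)B'(r)=h(r_0)B'(r_0)+\int_{r_0}^r p(s)B(s)\,ds.
\]
On any interval $[r_0,\rho]$ on which $B\ge 0$, the integral is nonnegative, so $hB'\ge h(r_0)B'(r_0)\ge 0$; since $h>0$ this forces $B'\ge 0$, hence $B$ is nondecreasing and stays $\ge B(r_0)\ge 0$. A short continuity argument then shows the maximal such interval is all of $[r_0,\infty)$: if $B$ first returned to $0$ at some finite $r^*$, then by continuity $B(r^*)=0$, but monotonicity and $B\ge B(r_0)$ would force $B\equiv 0$ on $[r_0,r^*]$, whence $B(r_0)=B'(r_0)=0$, contradicting the ``not both zero'' hypothesis. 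With $B\ge 0$ and $B'\ge 0$ established globally, strict positivity comes straight from the displayed identity: if $B'(r_0)>0$ the boundary term is positive, while if $B(r_0)>0$ then $B\ge B(r_0)>0$ makes the integral strictly positive for $r>r_0$; either way $h(r)B'(r)>0$, so $B'>0$ and $B$ is strictly increasing, giving $B(r)>B(r_0)\ge 0$.

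For part~(\ref{it:limit}), I would use that $h(r)=r(r-c)\sim r^2$ and that $p$ is eventually bounded below by a positive constant, say $p(r)\ge \tfrac12 p_\infty$ for $r\ge r_1$. Since $B$ is monotone and positive it has a limit $L\in[0,\infty]$. In the increasing case $B\ge B(r_0)>0$, so $(hB')'=pB\ge \delta>0$ for $r\ge r_1$; integrating shows $h(r)B'(r)\gtrsim r$, hence $B'(r)\gtrsim \tfrac{1}{r-c}$, and integrating once more makes $B(r)\to+\infty$. In the decreasing case $B'\le 0$ forces $hB'\le 0$; but if $L$ were positive then again $(hB')'=pB\ge \tfrac12 p_\infty L>0$ eventually, so $hB'$ would increase without bound to $+\infty$, contradicting $hB'\le 0$. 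Hence $L=0$.

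The routine parts are the two integrations and the asymptotic estimates using $h\sim r^2$ and $p\to p_\infty>0$. The one step demanding care is the sign-propagation in part~(\ref{it:positive})—specifically, ruling out that $B$ or $B'$ touches zero—where the ``not both zero'' hypothesis is exactly what excludes the trivial solution. I expect this continuity/bootstrapping step to be the main (if minor) obstacle; the remaining estimates are direct.
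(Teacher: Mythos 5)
Your proof is correct. Part~(\ref{it:positive}) is essentially the paper's argument: both exploit that $hB'$ is nondecreasing wherever $B\ge 0$ and rule out a first vanishing of $B$ or $B'$; you phrase it through the integrated identity $h(r)B'(r)=h(r_0)B'(r_0)+\int_{r_0}^r p(s)B(s)\,ds$, which handles the degenerate initial cases (one of $B(r_0),B'(r_0)$ equal to zero) a bit more transparently than the paper's ``it is straightforward to show both become strictly positive immediately.'' Part~(\ref{it:limit}) is where you genuinely diverge. The paper treats the finite-limit case by first showing $rB'(r)$ is bounded and then using the identity $rB'(r)-r_0B'(r_0)=\int_{r_0}^r \tfrac{p(s)B(s)}{s-c}\,ds-\int_{r_0}^r \tfrac{s}{s-c}B'(s)\,ds$ to conclude that $\tfrac{B(s)}{s-c}$ is integrable, forcing the limit to vanish; the increasing case then follows because a positive increasing function cannot tend to $0$. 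Your route is more direct and quantitative: in the increasing case $(hB')'\ge\delta>0$ eventually, so $h(r)B'(r)\gtrsim r$, $B'(r)\gtrsim r^{-1}$, and $B$ diverges logarithmically; in the decreasing case a positive limit $L$ would make $hB'$ increase linearly to $+\infty$, contradicting $hB'\le 0$. Both are valid; yours avoids the boundedness-of-$rB'$ step and the integrability bookkeeping, at the modest cost of invoking the eventual lower bound $p\ge\tfrac12 p_\infty$, which the hypothesis $p\to p_\infty>0$ supplies.
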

\begin{proof}
To prove the first item, it suffices to prove that $B'(r)>0$ for $r>r_0$. We first assume  $B(r_0)> 0$ and $B'(r_0)> 0$. Suppose, to get a contradiction, there exists $s>r_0$ such that $B'(s)=0$ for the first time. Since we have $B(r) >0$ for all $r_0<r\le s$, $\left( h(r) B' \right) ' =p(r) B> 0$, which implies $\alpha(s) B'(s)>\alpha(r_0) B'(r_0)>0$. It contradicts that $B'(s)=0$. For the case where either $B(r_0)=0$ or $B'(r_0)=0$,  it is straightforward to show that both become strictly positive immediately.

For the second item, since $B(r)>0$ is monotone, either $\lim_{r\to \infty} B(r) = +\infty$ or $\lim_{r\to \infty} B(r) = C$ for some constant $C\ge 0$. Suppose the latter case (which can only occur if $B$ is decreasing). We have $B'(r)$ is integrable.  We show that $rB'(r)$ is bounded: integrating the ODE, we have 
\[
r(r-c) B'(r) = r_0 (r_0-c) B'(r_0) + \int_{r_0}^r \beta(s) B(s) \, ds,
\]
and note that the right hand side is bounded by a constant multiple of $r$. Next, we compute 
\begin{align*}
	rB'(r) - r_0 B'(r_0) &= \int_{r_0}^r \left( s B'(s) \right)' \,ds\\
	&= \int_{r_0}^r \frac{1}{s-c} p(s) B(s) \, ds -\int_{r_0}^r \frac{s}{s-c} B'(s)\, ds.
\end{align*}
Since $rB'(r)$ is bounded and $B'$ is integrable, the first integral in the right hand side must converge to a bounded number as $r\to \infty$ (note the integrand is positive). It implies that  $\frac{1}{s-c} B(s)$ must be integrable and thus $C=0$. 
\end{proof}

We recall the equations for $a_{\ml}$ from \eqref{ode-333} and  omit the subscripts $k, \ell$ for $a_{\ml}$,  as we will work for each fixed $k, \ell$. We also recall $m\neq 0$ for the following arguments. Additionally,  for each fixed $k, \ell$, we denote the constant
\begin{align*}
\alpha_0 &= \tfrac{1}{2m} \left( (4m-r_0) a(r_0) +r_0 (r_0-2m) a'(r_0)\right) \\
&= \left( \tfrac{3}{2} + \tfrac{r_0}{4m} ( \ell (\ell+1) -2)\right) a(r_0). 
\end{align*}

Therefore,  \eqref{ode-333}  becomes
\begin{align}\label{eq:ode}
\begin{cases} 
	&\left(r\left ( r- 2m \right) a' \right)' = \left(\tfrac{4m^2}{ r(r-2m) } + \ell (\ell + 1) \right) a - \tfrac{4 m^2}{r(r-2m) } \alpha_0 \\
	&a'(r_0)= \tfrac{1}{2(r_0-2m)} \left( \ell(\ell+1) - \tfrac{2m}{r_0} \right) a(r_0).
\end{cases}
 \end{align}
 
We will rewrite $a(r)$ and work with the corresponding differential equation in two different ways. Let  $A(r) = a(r) - \alpha_0$. Then $A(r)$ satisfies 
	\begin{align}\label{eq:A}
	\left\{ \begin{array}{l}	\left(r\left ( r- 2m \right) A' \right)'=\left(\tfrac{4m^2}{ r(r-2m) } + \ell (\ell + 1) \right) A + \alpha_0 \ell(\ell+1) \\
		A(r_0)= \left( -\tfrac{1}{2} - \tfrac{r_0}{4m} (\ell(\ell+1)-2) \right) a(r_0)\\
		A'(r_0)= \tfrac{1}{2(r_0-2m)} \left( \ell(\ell+1) - \tfrac{2m}{r_0} \right) a(r_0)
	\end{array}\right..
\end{align}
Alternatively, if $\ell\neq 1$, we let $B(r) = a(r)- \frac{\beta_0}{r(r-2m)} $ where 
\[
\beta_0 = \tfrac{4m^2\alpha_0}{\ell(\ell+1)-2}.
\]
Then $B(r)$ satisfies
	\begin{align}\label{eq:B}
	\left\{ \begin{array}{l}\left( r(r-2m) B'\right)'  = \left(\tfrac{4m^2}{ r(r- 2m ) } + \ell (\ell + 1) \right) B\\
		B(r_0)= a(r_0) - \frac{1}{r_0(r_0- 2m )}\beta_0\\
		B'(r_0)= a'(r_0) + \frac{2(r_0-m)} {(r_0(r_0-2m))^2} \beta_0
	\end{array}\right..
\end{align}
(In fact, one can introduce a change of variables to rewrite the above equation as an associated Legendre equation and analyze the explicit solutions; however, we will not include that approach here.) Again, we should have denoted $A_{\ml}(r)$ and $B_{\ml}(r)$ but since we will work with a fixed $k, \ell$,  the subscript can be omitted for clarity.

 
    

 To prove Proposition~\ref{pr:tilde}, we can assume $a(r_0)\neq  0$. (If $a(r_0)=0$, $a(r)$ must be identically zero by uniqueness.) The proposition is directly implied by following  lemma.

 \begin{lemma}
Assume $A$ solves \eqref{eq:A} and  $B$ solves \eqref{eq:B} and $a(r_0)\neq 0$.
\begin{enumerate}
\item For $\ell=0$, $\lim_{r\to \infty} A(r) = \left(\frac{r_0}{2m} -\tfrac{1}{2} \right) a(r_0)$. (Note that  $\alpha_0 = \left(\tfrac{3}{2} - \frac{r_0}{2m} \right) a(r_0)$ when $\ell=0$, and thus $\lim_{r\to \infty} A(r) \neq -\alpha_0$.)
\item For $\ell = 1$, $A(r)$ cannot converge to a constant, assuming $A'(r)$ converges to zero as $r\to \infty$. 
\item For $\ell\ge 2$, we have either $\lim_{r\to \infty} B(r) = +\infty$, $\lim_{r\to \infty} B(r) = -\infty$, or $A(r)$ cannot converge to a constant, assuming $A'(r)$ converges to zero as $r\to \infty$.
\end{enumerate}
 \end{lemma}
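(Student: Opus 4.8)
The plan is to analyze the three cases by exploiting the different structures of equations \eqref{eq:A} and \eqref{eq:B}, combined with the initial value condition \eqref{eq:ode} and the ODE comparison tool of Lemma~\ref{le:ode}. The overall strategy: for a solution $a(r)$ with $a(r_0)\neq 0$ that decays at infinity, I will derive a contradiction by showing the relevant shifted function (either $A$ or $B$) cannot exhibit the asymptotic behavior forced by $a(r)\to 0$. Since $a(r)=A(r)+\alpha_0 = B(r)+\tfrac{\beta_0}{r(r-2m)}$, decay of $a$ means $A(r)\to -\alpha_0$ and (since the $\beta_0$ term vanishes at infinity) $B(r)\to 0$; moreover $a'(r)\to 0$ translates to $A'(r)\to 0$.

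For $\ell=0$, equation \eqref{eq:A} has the simplest form: the source term $\alpha_0\ell(\ell+1)$ vanishes, so $A$ solves the homogeneous equation $(r(r-2m)A')' = \tfrac{4m^2}{r(r-2m)}A$. First I would compute the limit of $A(r)$ directly. The key observation is that the coefficient $h(r)=r(r-2m)$ grows quadratically while $p(r)=\tfrac{4m^2}{r(r-2m)}$ decays like $r^{-2}$, so the equation is close to $(r^2 A')'=0$ at infinity, whose decaying-derivative solutions converge to a constant. I would integrate: $r(r-2m)A'(r) = r_0(r_0-2m)A'(r_0)+\int_{r_0}^r \tfrac{4m^2}{s(s-2m)}A(s)\,ds$, and then show that using the explicit initial data and integrating once more, $\lim_{r\to\infty}A(r)$ equals the stated value $\left(\tfrac{r_0}{2m}-\tfrac12\right)a(r_0)$. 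Since this limit is nonzero and differs from $-\alpha_0 = \left(\tfrac{r_0}{2m}-\tfrac32\right)a(r_0)$ (they differ by exactly $a(r_0)\neq 0$), the decay $a(r)\to 0$ fails. This case is essentially an explicit computation with the $\ell=0$ solutions.

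For $\ell=1$ and $\ell\ge 2$, I would use Lemma~\ref{le:ode}. For $\ell\geq 2$, $B$ solves the homogeneous equation \eqref{eq:B} with coefficients $h(r)=r(r-2m)$ positive and $p(r)=\tfrac{4m^2}{r(r-2m)}+\ell(\ell+1)$ converging to the positive constant $\ell(\ell+1)>0$, so both hypotheses of Lemma~\ref{le:ode} are met. The idea is: if $B(r_0)$ and $B'(r_0)$ have a consistent sign pattern, part~\eqref{it:positive} forces $B$ strictly monotone and positive (or, by the symmetric argument applied to $-B$, strictly monotone and negative), and then part~\eqref{it:limit} forces $B\to\pm\infty$ (if increasing) or $B\to 0$ (if decreasing). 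To get the trichotomy I would examine the signs of $B(r_0)$ and $B'(r_0)$ from \eqref{eq:B}: when they force $B$ increasing, we land in the $\pm\infty$ alternatives; when they force $B$ decreasing to $0$, I would show this is incompatible with $A'(r)\to 0$ (since $B$ decreasing to $0$ forces a specific decay rate on $a=B+\tfrac{\beta_0}{r(r-2m)}$ that contradicts the required convergence of $A$ to the nonzero constant $-\alpha_0$). The $\ell=1$ case is handled through $A$ directly since $\beta_0$ is undefined there; here the source term $\alpha_0\ell(\ell+1)=2\alpha_0$ is generally nonzero, so I would subtract the constant particular solution or apply the comparison lemma to $A$ after absorbing the source, showing $A$ cannot settle to a constant with vanishing derivative.

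The main obstacle I anticipate is the sign bookkeeping in the $\ell\ge 2$ case: one must verify that the initial data $B(r_0), B'(r_0)$ from \eqref{eq:B}, written in terms of $a(r_0)$ and the constant $\beta_0=\tfrac{4m^2\alpha_0}{\ell(\ell+1)-2}$, genuinely fall into the sign configurations covered by Lemma~\ref{le:ode}\eqref{it:positive}, and that the ``decreasing to $0$'' branch can be excluded via the $A'\to 0$ hypothesis rather than leaving a loophole. This requires carefully relating the $A$-picture (convergence to $-\alpha_0$) and the $B$-picture (convergence to $0$), and checking that the signs of $m$ (recall $m$ may be negative, with $r_0>2m_0$ ensuring $r_0-2m>0$ in all cases) do not invert the monotonicity conclusions. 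The computations for $\ell=0,1$ are more mechanical, so the $\ell\ge 2$ trichotomy and its sign analysis is where I would concentrate the effort.
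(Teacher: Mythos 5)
Your proposal correctly identifies the broad tools (explicit solution for $\ell=0$, the comparison Lemma~\ref{le:ode} for $\ell\ge 2$), but it is missing the central mechanism that the paper uses to kill the ``convergent'' branches, and the substitute reasoning you offer does not work. The paper's key device is the auxiliary quantity $\phi(r)=r(r-2m)A(r)$ and its normalized derivative $\Phi(r)=\tfrac{1}{r(r-2m)}\phi'(r)=\tfrac{2(r-m)}{r(r-2m)}A+A'$, which satisfies $\Phi'(r)=\tfrac{1}{r(r-2m)}\bigl((\ell(\ell+1)-2)A+\alpha_0\ell(\ell+1)\bigr)$. If $A$ converges to a constant and $A'\to 0$, then $\Phi\to 0$; the contradiction is obtained by showing $\Phi$ is increasing and tends to a strictly positive limit (for $\ell=1$ this is immediate since $\Phi(r_0)=0$ and $\Phi'=\alpha_0/(r(r-2m))>0$; for the residual branch of $\ell\ge2$ it follows from the lower bound $a(r)\ge\tfrac{r_0(r_0-2m)}{r(r-2m)}a(r_0)$ and an explicit integral estimate). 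Your proposed replacement --- that ``$B$ decreasing to $0$ forces a decay rate on $a$ that contradicts the convergence of $A$ to $-\alpha_0$'' --- is not a contradiction at all: $B\to 0$ gives $a\to 0$, hence $A\to-\alpha_0$, which is exactly consistent with the hypothesis you are trying to refute. Nothing at the level of limits alone rules this branch out; you need the monotone quantity $\Phi$. Likewise for $\ell=1$ there is no constant (or $1/(r(r-2m))$-type) particular solution to subtract, since $\ell(\ell+1)-2=0$ is precisely why $\beta_0$ is undefined there, so ``absorbing the source'' fails at exactly this mode.

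The second gap is the sign bookkeeping for $\ell\ge2$, which you flag but do not resolve. Lemma~\ref{le:ode}\eqref{it:positive} requires $B(r_0)\ge0$ \emph{and} $B'(r_0)\ge0$, but the initial data in \eqref{eq:B} does not guarantee a consistent sign pattern: $B'(r_0)=a'(r_0)+\tfrac{2(r_0-m)}{(r_0(r_0-2m))^2}\beta_0$ can be negative even when $B(r_0)\ge0$, since the sign of $\beta_0$ depends on $m$, $r_0$, and $\ell$. Moreover, when the lemma does apply it only produces the increasing alternative ($B\to+\infty$); the ``decreasing to $0$'' case never comes from part~\eqref{it:positive}, so the mixed-sign configurations are exactly the ones your plan leaves uncovered. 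The paper circumvents this by passing to $f(r)=r(r-2m)B(r)$, whose initial derivative $f'(r_0)=2(r_0-m)a(r_0)+r_0(r_0-2m)a'(r_0)$ has a controlled sign (the $\beta_0$ contributions cancel), establishing positivity of $B$ from $f$, and then using a second monotone quantity $F=f'/(r(r-2m))$ to locate a point where $B'\ge0$ before invoking Lemma~\ref{le:ode}. A smaller issue: for $\ell=0$, ``integrating twice'' does not by itself yield the exact limit $\bigl(\tfrac{r_0}{2m}-\tfrac12\bigr)a(r_0)$, because the integral $\int \tfrac{4m^2}{s(s-2m)}A(s)\,ds$ involves the unknown $A$; the paper instead solves the ODE for $\phi$ in closed form ($\phi=c_0r^2+c_1r-c_1m$) and reads off the limit from $c_0$.
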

 \begin{proof}

 Let $\phi(r) = r(r-2m) A(r) $. We compute
\begin{align}
	\phi'(r) &= 2 (r-m) A + r(r-2m) A' \label{eq:phi1}\\
	\begin{split}\label{eq:phi2}
	\phi''(r)&=2A + 2 (r-m) A' +\left(\tfrac{4m^2}{r(r-2m) } + \ell (\ell + 1) \right) A + \alpha_0 \ell(\ell+1)\\
	& =  \tfrac{2 (r-m)}{r(r-2m) }\phi'  + \tfrac{\ell(\ell+1)-2}{r(r-2m)}\phi + \alpha_0 \ell(\ell+1)
	\end{split}
\end{align}
where we use \eqref{eq:A}. The initial values are given by
\begin{align*}
	\phi(r_0) &= -r_0 (r_0-2m)\left( \tfrac{1}{2} + \tfrac{r_0}{4m} (\ell(\ell+1)-2)\right) a(r_0) \\
	\phi'(r_0) &= 2(r_0-m) A(r_0) + r_0 (r_0-2m) A'(r_0)\\
	&=- \tfrac{r_0(r_0-2m)}{2m} (\ell (\ell+1)-2) a(r_0)
\end{align*}

We first discuss the case $\ell=0$. In this case, $\phi$ solves
\[
r(r-2m) \phi''(r) = 2(r-m) \phi'(r) -2 \phi.
\]
It is straightforward to verify that a general solution to the above ODE takes the form $\phi(r) = c_0 r^2 + c_1 r - c_1 m $  for some constants $c_0, c_1$. By the initial value, we have $c_0 =\frac{r_0-m}{2m} a(r_0)$ and $c_1 = -r_0 a(r_0)$, and thus 
\[
	A(r) = \tfrac{c_0 r^2 + c_1 r - mc_1}{r(r-2m)} \to c_0= \tfrac{r_0-m}{2m} a(r_0)\quad \mbox{ as } r\to \infty.
\]

Next, we discuss the case $\ell =1$ and assume $a(r_0)>0$, thus $\alpha_0 >0$. Define 
\begin{align} \label{eq:F}
\Phi(r) =\tfrac{1}{r(r-2m)} \phi'(r) = \tfrac{2(r-m)}{r(r-2m)} A(r) + A'(r)
\end{align}
and compute 
\begin{align}
	\Phi'(r)& =\tfrac{1}{r(r-2m)} \phi''(r) - \tfrac{2(r-m)}{\left(r(r-2m)\right)^2} \phi'(r)\notag\\
	&= \tfrac{1}{r(r-2m)} \big( (\ell(\ell+1)-2) A (r)+ \alpha_0 \ell (\ell+1)\big) \label{eq:phi}\\
 &= \tfrac{\alpha_0}{r(r-2m)}  \quad \mbox{ letting $\ell=1$}.\notag
\end{align}
When $\ell=1$,  $\Phi(r_0)=\frac{1}{r_0(r_0-2m)} \phi'(r_0)=0$ and $\Phi'(r)>0$ because $\alpha_0>0$, so $\Phi$ increases to a positive constant,  using that $\Phi'(r)$ is integrable. Thus, the desired conclusion follows  the relation \eqref{eq:F}.

For  $\ell\ge 2$, we will work with $B(r)$. Similarly to $\phi(r)$ defined above, we  define $f(r) =  r ( r-2m )B(r)$. 
Then  as the above equations for $\phi'$ and $\phi''$, we have 
\begin{align}
    f'(r) &= 2 (r-m) B(r) +r(r-2m)B' (r) \notag\\
       f''(r)& = \tfrac{2 (r-m)}{r(r-2m) }f'  + \tfrac{\ell(\ell+1)-2}{r(r-2m)}f. \label{eq:f}
\end{align}
We may assume $a(r_0)>0$, and hence $a'(r_0)>0$,  $f'(r_0) >0$. Note that $B(r_0)$ may change sign.
\medskip

\noindent {\bf Case 1:} We consider the case $B(r_0)\ge 0$ and will show that $B(r)$ diverges to either $+\infty$ or $-\infty$.  We first show that $f(r)>0$ and $B(r)>0$ for all $r>r_0$. Note that $B$ instantly becomes strictly positive because $f'(r_0)>0$. Suppose, on the contrary, there exists the first $s>r_0$ such that either $f'(s)=0$ or $B(s)=0$ (or both). Then $f'(r)>0$ for $r\in [r_0, s)$, and it implies that $B(s)>0$. Then \eqref{eq:f}  implies that $f$ has a local minimum at $s$, contradicting $f'(r)>0$ for $r<s$. 
Next, we show that have $B'\ge 0$ somewhere, and thus Lemma~\ref{le:ode} implies $B(r)$ diverges to $+\infty$.  Suppose, on the contrary, $B'(r)<0$ for all $r$. As $\Phi$ above, we define $F(r) =\frac{1}{r(r-2m)} f'(r) =  \frac{2 (r-m)}{r(r-2m)} B(r) + B'(r)$ and compute
\[
    F'(r) =\frac{1}{r(r-2m)} (\ell(\ell+1)-2) B(r).
 \]
By positivity of $B$,  we have $B'(r)\to 0$ and $B(r)$ converges to a constant, which implies $F(r)\to 0$. Since $F$ is monotonically increasing with $F(r_0) = \tfrac{1}{r_0(r_0-2m)} f'(r_0)>0$, it gives a contradiction.

\medskip
\noindent {\bf Case 2:}  Now consider the case $B(r_0)<0$, and it suffices to assume $B(r)<0$ and $f(r)>0$ for all $r$. If $B$ later becomes nonnegative,  then we have $B(s)\ge 0$ and $B'(s)\ge 0$ for some $s$ (as $B$ must increase from an initial negative value to a nonnegative value somewhere along  the way), with at least one of $B(s)$ and $B'(s)$ being nonzero, which implies $B$ diverges to $+\infty$ by Lemma~\ref{le:ode}. Similarly, if $f$ later becomes zero at~$s$, we have $B(s)=0$ and $B'(s)< 0$, which implies $B$ diverges to $-\infty$ by Lemma~\ref{le:ode}.

We now assume $B(r)<0$ and $f(r)>0$ for all $r$, and will show that $A$ cannot converge to a constant if $A'$ converges to zero. Using $f>0$, we have $B'(r) > -\frac{2(r-m)}{r (r- 2m)} B$ and thus we solve it to obtain 
\[
    B(r) \ge \tfrac{r_0(r_0-2m)}{r(r-2m)} B(r_0).
\]
Therefore, by the initial condition in \eqref{eq:B}, we have 
\[
    a(r) = B(r) + \tfrac{\beta_0}{r(r-2m)} \ge \tfrac{r_0(r_0-2m)}{r (r-2m)} \left( B(r_0)+\tfrac{ \beta_0}{r_0(r_0-2m)}\right) =  \tfrac{r_0(r_0-2m)}{r (r-2m)}  a(r_0) \ge 0.
\]
Then \eqref{eq:phi} implies
\[
    \Phi'(r) \ge  \tfrac{2\alpha_0}{r(r-2m)}.
\]
Integrating the above inequality gives
\[
    \lim_{r\to \infty} \Phi(r) \ge \Phi(r_0)  + \int_{r_0}^\infty \tfrac{2\alpha_0}{s(s-2m)} \, ds.
\]
We evaluate the right hand side:
\begin{align*}
    \Phi(r_0) = \tfrac{1}{r_0(r_0-2m)} \phi'(r_0) =-\tfrac{1}{2m} (\ell(\ell+1)-2) a(r_0)
\end{align*}
and 
\begin{align*}
    \int_{r_0}^\infty \tfrac{2\alpha_0}{s(s-2m)} \, ds &=\left. \tfrac{\alpha_0}{m} \left( \ln \left(r-2m \right) - \ln r \right)\right|_{r=r_0}^{r=\infty} = -\tfrac{ \alpha_0 }{m} \ln \left(1-\tfrac{2m}{r_0} \right)\\
    &> \tfrac{2}{r_0} \alpha_0= \left( \tfrac{3}{r_0} + \tfrac{1}{2m} ( \ell (\ell+1) -2)\right) a(r_0)
\end{align*}
where we use $-\frac{1}{x} \ln (1-x) > 1$ for any $|x|<1$ in the second line. Combining the above inequalities, we obtain $\lim_{r\to \infty} \Phi(r) >\frac{3}{r_0} a(r_0)$. Thus,  $A$ cannot converge to a constant, assuming $A'$ converges to zero.

 \end{proof}

\bibliographystyle{amsplain}
\bibliography{reference}
\end{document}